\documentclass[11pt,twoside, final]{amsart}
\copyrightinfo{0}{Iranian Mathematical Society}
\pagespan{1}{\pageref*{LastPage}}
\usepackage{etoolbox,lastpage}
\commby{}
\usepackage{amsmath,amsthm,amscd,amsfonts,amssymb,enumerate}
\usepackage{graphicx}		
\usepackage{color}
\usepackage[colorlinks]{hyperref}

\def\opn#1#2{\def#1{\operatorname{#2}}}
\opn\chara{char} \opn\length{\ell} \opn\pd{pd} \opn\rk{rk}
\opn\projdim{proj\,dim} \opn\injdim{inj\,dim} \opn\rank{rank}
\opn\depth{depth} \opn\grade{grade} \opn\height{height} \opn\supp{supp}
\opn\embdim{emb\,dim} \opn\codim{codim}

\opn\Tr{Tr} \opn\bigrank{big\,rank}
\opn\superheight{superheight}\opn\lcm{lcm}
\opn\trdeg{tr\,deg}
\opn\reg{reg} \opn\lreg{lreg} \opn\ini{in} \opn\indeg{indeg}
\opn\Syz{Syz} \opn\size{size} \opn\reg{reg}
 \opn\link{link}\opn\pd{pd}
\def\m{{\mathfrak{m}} }

\newtheorem{theorem}{Theorem}[section]

\newtheorem{corollary}[theorem]{Corollary}
\theoremstyle{definition}
\newtheorem{definition}[theorem]{Definition}
\newtheorem{example}[theorem]{Example}
\theoremstyle{remark}
\newtheorem{remark}[theorem]{Remark}
\numberwithin{equation}{section}

 \begin{document}

\title[Minimal free resolution  by  mapping cone]{Minimal free resolution of monomial ideals by iterated mapping cone}

\author[L. Sharifan]{Leila Sharifan$^*$}
\address[Leila Sharifan]{Faculty of Mathematics and Computer Sciences‎, ‎Hakim Sabzevari University‎, ‎P.O‎. ‎Box 397‎, ‎Sabzevar‎, ‎Iran, and
 School of Mathematics, Institute for research in Fundamental Sciences (IPM), P. O. Box: 19395-5746, Tehran, Iran.}
\email{leila-sharifan@aut.ac.ir}


  \thanks{$^*$Corresponding author}
%

 \maketitle
%

\begin{abstract}
In this paper we study minimal free resolutions of some classes of
monomial ideals. we first give a
sufficient condition to check the minimality of the resolution
obtained by the  mapping cone. Using it, we  obtain the Betti numbers of  max-path ideals of
rooted trees and ideals containing
powers of variables. In particular, we discuss  about resolutions of  ideals of the form $J_{\mathcal{H}}+(x_{i_1}^2,\ldots, x_{i_m}^2)$ where $J_{\mathcal{H}}$ is the edge ideal of a  hypergraph $\mathcal{H}$.\\
\textbf{Keywords:}   Mapping cone,
regularity, max-path ideal,
 edge ideal of hypergraph, independent number. \\
\textbf{MSC(2010):}  Primary: 13D02; Secondary: 05E40,05C65.
\end{abstract}

\section{\bf Introduction}

 Let ${\bf{k}}$ be a field, $R={\bf{k}}[x_1,\ldots, x_n]$ the polynomial ring
 in $n$ variables, and $I$ a graded  ideal.
Finding algebraic properties  of $I$ like regularity, projective dimension and  depth is a central problem in commutative algebra and algebraic geometry. Computing the (graded) minimal free resolution of $I$ is the key to find these invariants. However, describing the precise minimal free resolution of an ideal, even in the case that $I$ is a square-free monomial ideal is not an easy problem and when $I$ is  not a square monomial ideal the problem is more difficult. An standard tool to compute a free resolution of  an ideal is {\it{iterated mapping cone}}. In the monomial case, several well known resolution arise as iterated mapping cone. For example, the Taylor resolution \cite{Ta}, the Eliahou-Kervaire resolution of stable monomial ideals \cite{EC} and resolution of monomial ideals with linear quotients \cite{HT}.

In this paper,   by iterated mapping cone, we study minimal free resolution of
some class of monomial ideals.   Note that, in general the result of the mapping cone is not a minimal free resolution. The importance of our work is that we find a sufficient condition for {\it minimality} of the resolution obtained by this tool. Then we focus to the monomial case and study the particular classes   {\it max-path
ideals of rooted trees} and monomial ideals containing some {\it powers of
variables.}

The paper proceeds as follows. After reviewing some algebraic tools
in  Section 2, in Theorem \ref{criteria1} we  show that for a
graded ideal $I$ and a homogeneous polynomial $f$   which does not belong to $I$,
the minimal free resolution of $R/I+(f)$ is obtained by the mapping
cone  provided that we can decompose $f$ as $f=h_1h_2$ where $h_i$s are  homogeneous polynomials, $\deg(h_2)>0$ and $(I:f)=(I:h_1)$.  Theorem \ref{criteria1}  leads us to introduce  the class of monomial ideals
 of  decreasing type. We say
$I=(u_1,\ldots,u_m)$ is {\it of decreasing type} with respect to the
order $u_1,\ldots, u_m$ of its generators, if for each $u_j$ there exists $x_i\in
\supp(u_j)$ such that $\deg_{x_i}(u_j)>\deg_{x_i}(u_r)$ for all
$r<j$. In
this situation  the minimal free resolution of $R/I$ is obtained by
iterated mapping cone (see Corollary \ref{dec type}).

In  the next sections we apply Theorem \ref{criteria1} and
Corollary  \ref{dec type} to study
homological properties  of  max-path
ideals of rooted trees and
monomial ideals containing powers of some variables. Beside this
goal we present some other interesting properties of the mentioned
classes of ideals.

When $I$ is a square-free  monomial ideal, it is possible to associate to $I$ a combinatorial object such as graph or hypergraph and encode algebraic properties of $I$ in terms of combinatorial properties of corresponding  object.
 It is also natural to start
by a combinatorial object and associate  to it an ideal. The classes of
path ideals of graphs in \cite{CD} and  max-path ideals of
trees in \cite{ShKhN} are defined in this way.

Let $T$ be a rooted tree, the max-path ideal of $T$, denoted
$PI(T)$,
 is defined as $$PI(T)=(x_{i_1}\cdots x_{i_t}\ ;\ {i_1} ,\ldots,
{i_t}{\text{ is a maximal path in}} \ T) \subseteq R,$$ where by a
{\it maximal path} we mean a path between the root of tree and one
of its leaves. In Theorem \ref{rooted tree main1} we give an interesting application of
Corollary  \ref{dec type}. We  show that  $PI(T)$ is of decreasing type and  compute
Betti numbers, regularity, and projective dimension of $R/PI(T)$ in
terms of the number of vertices of $T$ and the number of its leaves.

Next, we consider $PI(T)$ as the
facet ideal of a simplicial complex. denoting  by $\Delta_{PI(T)}$
the simplicial complex corresponding to $PI(T)$, in Theorem
\ref{simplicial tree} we show that $\Delta_{PI(T)}$ is a simplicial
tree. This shows that
 $R/PI(T)$
is sequentially Cohen-Macauly  and so, $PI(T)^\vee$ is a
componentwise linear ideal (Theorem \ref{rooted tree main 2}).

Section 4 is devoted to the study of monomial ideals that
contain some powers of some variables.  Assume that  $I=J+( x_{i_1}^{a_{i_1}},\ldots,x_{i_m}^{a_{i_m}})$ where $J$ is a monomial ideal and $G(I)=G(J)\cup \{x_{i_1}^{a_{i_1}},\ldots,x_{i_m}^{a_{i_m}}\}$. In Theorem
\ref{gereralized peeva} we give a formula for the  graded Betti numbers of $R/I$.  We remark that this result is a straight forward consequence of \cite[Theorem 6.1]{MPS}
(see also \cite[Theorem 2.1]{MPS}). Here we give an easier  proof  for it as an application of
Theorem \ref{criteria1}.

Next we apply Theorem \ref{gereralized peeva} to study monomial
ideals of the form $I=J+(x_{i_1}^2,\ldots ,x_{i_m}^2)$ where $J$ is
a square-free monomial ideal. We consider $J$ as  edge ideal of a hypergraph $\mathcal{H}$. In Therem \ref{hypergraph main theorem} we
compute the graded Betti numbers of $R/I$ in terms of the graded
Betti numbers of $R/J_{\mathcal{H}}$ and the graded Betti numbers of
$R/J_{\mathcal{H'}}$ for some hypergraphs $\mathcal{H'}$ associated
to $\mathcal{H}$. We believe that this approach can be more efficient than the technique of polarization in many cases. For example, when $\mathcal{H}$ is a graph, we just need to consider the edge ideals of  the graph and some induced subgraphs of it instead of working in a larger polynomial ring. To see an application of our approach, in Theorem \ref{complete graph} and  Theorem \ref{characterization complete graph} we focus to the particular case $I=J_G+(x_1^2,\ldots ,x_n^2)$ when $G=K_{n_1,\ldots,n_t}$. We compute the graded Betti numbers of $R/I$ and show that the property of being a complete $t-$partite graph for $G$  depends only to the last Betti numbers of $R/I$.

Another interesting consequence  of  Theorem \ref{hypergraph main theorem} is given in Corollary \ref{graph main theorem}. There,  we study the last (graded) Betti
numbers of $R/I$ and relate these invariants to the maximal
independent sets of  ${\mathcal{H}}$. In Corollary \ref{final
result}, for the case
$I=J_{\mathcal{H}}+(x_1^{2},\ldots, x_n^{2})$, we show that
$\beta_{n,j}(R/I)$ is equal to the number of facets of size $j-n$ in
the independent complex of ${\mathcal{H}}$. As an important consequence of it we have
 $\reg(R/I)=\alpha(\mathcal{H})$ where
$\alpha(\mathcal{H})$ is the independence number of $\mathcal{H}$.
Note that the formula of regularity, just in the case that  $\mathcal{H}$ is a graph,  also obtained by \cite[Theorem 20
and Lemma 21]{W}.

\section{\bf Preliminaries}

Throughout this paper, $\m=(x_1,\ldots,x_n)$ is the unique maximal
graded ideal of $R$ and  the set $\{1, \ldots,n\}$ is denoted by
$[n]$.

 For a graded $R-$module $M$, let $\{\beta_{i,j}(M)\}$ be the sequence of the graded Betti numbers of $M$, the {\it Castelnuovo-Mumford
 regularity } of $M$ is defined as
 $$\reg(M)=\max\{j-i\ ; \ \beta_{i,j}(M)\neq 0\},$$
 and the {\it projective dimension} of $M$ is defined as
 $$\pd(M)=\max\{i\ ; \ \beta_{i,j}(M)\neq 0 \ {\text{for some}}\
 j\}.$$
 By {\it Auslander-Buchsbaum formula} (see \cite[Theorem 19.9]{E1}, one
 has  $\pd(M)+\depth(M)=n.$

\begin{remark}\label{sfree pd}\rm
 For a  squarefree monomial ideal $I\subsetneq \m$, since $\m$ does
 not belong to the set of associated primes of $I$, we always have
 $\depth(R/I)>0$ and consequently, $\pd(R/I)<n$.
 \end{remark}

 Let $I=(x_{11}\cdots x_{1n_1},\ldots,x_{t1}\cdots x_{tn_t})$ be a
 squarefree monomial ideal, the {\it Alexander dual ideal} of $I$,
 denote $I^\vee$, is defined as
 $$I^\vee=(x_{11},\ldots, x_{1n_1})\cap\ldots\cap(x_{t1},\ldots,
 x_{tn_t}).$$

  For a graded   $R$-module $M$ and $d \in {\mathbb{Z}} $  we write $ M_{<d>} $ for
the submodule of $M$  which is generated by all homogeneous elements
of $ M $ with degree $ d$.  We say that $ M$ has a $d$-linear
resolution if $\beta_{i, j}(M) = 0 $ for $j \neq d+i$ and we say $
M$  is componentwise linear if for all integers $d $ the module $
M_{<d>} $ has a $d$-linear resolution.

 \begin{definition}\rm
A graded $R-$module $M$ is called {\it sequentially Cohen-Macaulay}
if there exists a finite filtration of graded $R-$modules
$$0=M_0\subset M_1\subset\cdots\subset M_r=M$$
such that each $M_i/M_{i-1}$ is Cohen-Macaulay and
$$\dim(M_1/M_0)<\dim(M_2/M_1)<\cdots<\dim(M_r/M_{r-1}).$$
\end{definition}
\begin{theorem}\label{dual}
Let $I$  be a squarefree monomial ideal. Then
\begin{enumerate}
    \item $\pd(R/I)=\reg(I^\vee)$ (\cite[Theorem 2.1]{T}).
    \item $R/I$ is sequentially Cohen-Macaulay if and only if
    $I^\vee$ is componentwise linear(\cite{HH}, see also \cite[Theorem
    8.2.20]{HH2}).
    \end{enumerate}
\end{theorem}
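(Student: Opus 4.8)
\noindent\emph{Sketch of the proof.} Both parts are results about Alexander duality already in the literature --- part (1) is Terai's theorem \cite[Theorem 2.1]{T}, and part (2) is the Herzog--Hibi theorem \cite{HH} (see also \cite[Theorem 8.2.20]{HH2}) --- so I will only outline how one proves them. Throughout, write $\Delta$ for the simplicial complex on the vertex set $[n]$ with $I_\Delta=I$; then $I^\vee=I_{\Delta^\vee}$, where $\Delta^\vee=\{\,F\subseteq[n] : [n]\setminus F\notin\Delta\,\}$ and $(\Delta^\vee)^\vee=\Delta$. The common tool is Hochster's formula for the graded Betti numbers of a Stanley--Reisner ring, together with combinatorial Alexander duality $\widetilde H_{i-1}(\Delta^\vee;{\bf k})\cong\widetilde H^{\,n-i-2}(\Delta;{\bf k})$ and its refinements for induced subcomplexes, links and deletions.

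For (1), I would compute both sides with Hochster's formula --- $\beta_{i,j}(R/I_\Delta)=\sum_{|W|=j}\dim_{\bf k}\widetilde H_{j-i-1}(\Delta|_W;{\bf k})$, and likewise for $R/I_{\Delta^\vee}$ --- and then transport the reduced (co)homology of the induced subcomplexes of $\Delta^\vee$ to reduced (co)homology of subcomplexes and links of $\Delta$ by combinatorial Alexander duality, keeping track of the degree shift dictated by the size of the ambient vertex set. Comparing which bidegrees $(i,j)$ can carry a nonzero Betti number on each side, the largest homological degree occurring for $R/I$ turns out to equal the largest value of $j-i$ occurring for $I^\vee$; reading off $\pd$ on one side and $\reg$ on the other gives $\pd(R/I)=\reg(I^\vee)$.

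For (2), I would reduce to Eagon--Reiner by passing to skeleta. First recall Duval's criterion: $R/I_\Delta$ is sequentially Cohen--Macaulay if and only if the pure $i$-skeleton $\Delta^{[i]}$ is Cohen--Macaulay for every $0\le i\le\dim\Delta$. Next verify the dictionary $I_{(\Delta^{[i]})^\vee}=\big((I_{\Delta^\vee})_{\langle n-i-1\rangle}\big)$: the generators of $I_{(\Delta^{[i]})^\vee}$ are the complements of the $i$-dimensional faces of $\Delta$, which are exactly the squarefree monomials of degree $n-i-1$ lying in $I_{\Delta^\vee}$. By the Eagon--Reiner theorem --- the Cohen--Macaulay case of (1), namely ``$R/I_\Gamma$ is Cohen--Macaulay if and only if $I_{\Gamma^\vee}$ has a linear resolution'' --- the complex $\Delta^{[i]}$ is Cohen--Macaulay if and only if $(I_{\Delta^\vee})_{\langle n-i-1\rangle}$ has an $(n-i-1)$-linear resolution. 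Since the only other nonzero graded component, $(I_{\Delta^\vee})_{\langle n\rangle}=(x_1\cdots x_n)$, is principal and hence automatically has a linear resolution, $I^\vee$ is componentwise linear exactly when every $(I_{\Delta^\vee})_{\langle n-i-1\rangle}$ with $0\le i\le\dim\Delta$ has a linear resolution. Chaining these equivalences yields (2).

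The real work in both parts is the combinatorial bookkeeping of Alexander duality --- pairing induced subcomplexes, links and deletions of $\Delta$ with those of $\Delta^\vee$ and tracking the degree shifts --- together with, for (2), importing Duval's skeleton criterion and checking the skeleton/graded-component dictionary. As both (1) and (2) are proved in the references quoted in the statement, we take them as known.
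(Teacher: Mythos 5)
The paper itself offers no proof of this theorem: it is quoted verbatim as a known result, with part (1) attributed to Terai and part (2) to Herzog--Hibi, exactly as in your citations, so your decision to defer to the literature matches the paper's treatment, and your outline of the two standard proofs (Hochster's formula plus combinatorial Alexander duality for Terai's theorem; Duval's pure-skeleton criterion plus Eagon--Reiner and the skeleton/degree dictionary for Herzog--Hibi) is the accepted route. One small imprecision in your sketch of (2): the ideal $I_{(\Delta^{[i]})^\vee}$ is generated by the \emph{squarefree} monomials of degree $n-i-1$ in $I_{\Delta^\vee}$, which is in general strictly smaller than the full component ideal $(I_{\Delta^\vee})_{\langle n-i-1\rangle}$ (the latter also contains non-squarefree monomials of that degree coming from lower-degree generators); to close this gap one invokes the standard fact that a squarefree monomial ideal is componentwise linear if and only if each of its squarefree components has a linear resolution (see \cite[Proposition 8.2.17]{HH2}), after which your chain of equivalences goes through as stated.
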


\vspace{3mm}

\subsection*{Iterated mapping cone}

In the following we recall the mapping cone technique from \cite{HT}.
Let  $\{f_1,\ldots , f_m\}$ be a homogeneous system of generators
for $I$, and $I_j = (f_1, \ldots , f_j)$. Then for $j = 2,\ldots, m$
there are exact sequences $$0\to R/(I_{j-1} : f_j) \to R/I_{j-1} \to
R/I_j\to  0.$$

\noindent Assuming that a free $R-$resolution $({\bf{F.}},\delta.)$
of $R/I_{j-1}$
 and a free $R-$resolution $({\bf{G.}},d.)$ of $R/(I_{j-1} : f_j)$ are
 known, we can obtains a resolution $({\bf M(\psi)},\gamma.)$ of $R/I_j$ as a {\it mapping cone} of a
complex homomorphism $\psi: {\bf{G.}}\to {\bf{F.}}$ which is a
lifting of the map $ R/(I_{j-1} : f_j) \to R/I_{j-1}$. The mapping
cone ${\bf M(\psi)}$ is the complex such that $$(
M(\psi))_i=F_i\oplus G_{i-1},$$ with the differential maps
$$\gamma_i(x,y)=(\psi_{i-1}(y)+\delta_{i}(x),-d_{i-1}(y))$$ where $x\in
F_i$ and $y\in G_{i-1}$. This complex is exact (see \cite[Page 650
and Proposition A3.19.]{E1}), so, it is a free resolution for
$R/I_j$.

Of course, in general,  such a resolution may be  non-minimal. But in
any case this method yields an inductive procedure to compute a
resolution of $R/I$ provided for each $j$, a resolution of
$R/(I_{j-1} : f_j)$ is known as well as the comparison map.

\vspace{3mm}

Next, we give a sufficient condition to check the
minimality of the resolution obtained by the mapping cone technique
for $R/I+(f)$ where $I$ is a graded ideal and $f$ is a homogeneous
polynomial.

 We remark  that this  result is a generalization
of \cite[Theorem 2.7]{BHK} where the authors study the minimal free
resolution of the path ideal of a rooted tree.

\begin{theorem}\label{criteria1}
Let $I$ be a graded  ideal of $R$ and $f$ is a homogeneous
polynomial  of degree $d$ which does not belong to $I$ then we have
the following graded short exact sequence
$$0\to R/(I: f)(-d) \to R/I \to R/I+(f)\to 0.$$ Assuming that the
minimal free resolution  of the modules $R/(I:f)$ and $R/I$ are
already known. Then the minimal free resolution  of $R/I+(f)$ is
obtained by the mapping cone provided that $f=h_1h_2$ where $h_1$
and $h_2$ are  homogeneous polynomials, $\deg(h_2)>0$ and
$(I:f)=(I:h_1)$

 and in this case

\begin{description}
    \item[(a)]
$$ \beta_{ij}(R/I+(f))=\beta _{ij}(R/I)+\beta_{i-1 j-d}(R/(I:f)),$$
\item[(b)]$$ \reg(R/(I+(f))=\max\{ \reg(R/I), \reg(R/(I:f))+d-1\}$$
\item[(c)]$$ \pd(R/(I+(f))=\max\{ \pd(R/I), \pd(R/(I:f))+1\}.$$
\end{description}
\end{theorem}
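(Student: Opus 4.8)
The plan is to construct the mapping cone explicitly and then use the hypothesis $(I:f) = (I:h_1)$ to force the comparison map to have entries in $\mathfrak{m}$, which is exactly what minimality requires. First I would set up the short exact sequence: since $f \notin I$ and $f$ is homogeneous of degree $d$, the map $R/(I:f)(-d) \to R/I$ given by multiplication by $f$ is well-defined, injective, and has cokernel $R/(I+(f))$, giving the stated graded short exact sequence. Let $({\bf F.}, \delta.)$ be the minimal free resolution of $R/I$ and $({\bf G.}, d.)$ the minimal free resolution of $R/(I:f)$, the latter shifted by $-d$. By the mapping cone construction recalled in the preliminaries, $({\bf M}(\psi), \gamma.)$ with $M(\psi)_i = F_i \oplus G_{i-1}$ is a (generally non-minimal) free resolution of $R/(I+(f))$, where $\psi \colon {\bf G.} \to {\bf F.}$ lifts the multiplication-by-$f$ map. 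The differential is $\gamma_i(x,y) = (\psi_{i-1}(y) + \delta_i(x), -d_{i-1}(y))$.

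The resolution is minimal precisely when $\gamma_i(M(\psi)_i) \subseteq \mathfrak{m}\, M(\psi)_{i-1}$ for all $i$. Since ${\bf F.}$ and ${\bf G.}$ are already minimal, the maps $\delta_i$ and $d_{i-1}$ contribute only entries in $\mathfrak{m}$; the only possible obstruction is the comparison map $\psi$. So the heart of the argument — and the main obstacle — is to show that $\psi_0 \colon G_0 \to F_0$, equivalently the lift of $1 \mapsto \bar f$ at the level of free modules, together with all higher $\psi_i$, can be chosen with entries in $\mathfrak{m}$. The key observation is the hypothesis $f = h_1 h_2$ with $\deg(h_2) > 0$ and $(I:f) = (I:h_1)$. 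I would argue that because $(I:h_1) = (I:f)$, the multiplication-by-$h_1$ map also induces a well-defined map $R/(I:f) \to R/(I:h_1) \cdot \text{(twist)} \hookrightarrow$ the relevant quotient, so one can first lift multiplication by $h_1$ to a chain map $\varphi \colon {\bf G.} \to {\bf G'.}$ (a resolution of $R/(I:h_1) = R/(I:f)$), and then compose with multiplication by $h_2$. Concretely: choose $\psi = h_2 \cdot \tilde\varphi$ where $\tilde\varphi$ lifts the isomorphism-type map induced by $h_1$; since $\deg(h_2) > 0$, every entry of $\psi$ acquires the factor $h_2 \in \mathfrak{m}$, hence lies in $\mathfrak{m}$. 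This is the step I expect to require the most care: one must check that the map induced by $h_1$ on $R/(I:f) \to R/I$ genuinely factors as "a chain map between minimal resolutions followed by multiplication by $h_2$," i.e. that lifting $h_1$ first and $h_2$ second is legitimate and that $h_1$ acting on $R/(I:f)$ lands appropriately because of the equality of colon ideals.

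Once minimality is established, parts (a), (b), (c) follow immediately by reading off the ranks and degrees from $M(\psi)_i = F_i \oplus G_{i-1}$. For (a): $\beta_{ij}(R/(I+(f))) = \beta_{ij}(R/I) + \beta_{i-1,j}(R/(I:f)(-d)) = \beta_{ij}(R/I) + \beta_{i-1,j-d}(R/(I:f))$, using that minimality means the Betti numbers are just the ranks of the graded pieces of the resolution. For (b), regularity is $\max\{j - i : \beta_{ij} \neq 0\}$; the contribution from ${\bf F.}$ gives $\reg(R/I)$, and the contribution from the shifted, homologically-shifted copy of ${\bf G.}$ gives $\max\{(j+d) - (i+1) : \beta_{ij}(R/(I:f)) \neq 0\} = \reg(R/(I:f)) + d - 1$. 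For (c), projective dimension is the top nonvanishing homological degree; ${\bf F.}$ contributes $\pd(R/I)$ and the shifted ${\bf G.}$ contributes $\pd(R/(I:f)) + 1$, so the maximum of the two is the answer. These three computations are routine bookkeeping; no cancellation can occur precisely because the resolution is minimal.
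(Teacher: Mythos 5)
Your proposal is correct and follows essentially the same route as the paper's proof: the comparison map is chosen as $h_2$ times a lift of the multiplication-by-$h_1$ map, and since $(I:f)=(I:h_1)$ the source complex is (up to a degree shift by $\deg(h_2)$) the minimal free resolution of $R/(I:f)$, so every entry of the comparison map carries the factor $h_2\in\m$ and the mapping cone is minimal, after which (a)--(c) are read off exactly as you do. The only slip is notational: the lift of multiplication by $h_1$ should be a chain map from the (shifted) minimal resolution of $R/(I:h_1)$ into the minimal resolution of $R/I$, not into another resolution of $R/(I:h_1)$; with that target corrected, your argument coincides with the paper's.
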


\begin{proof}

 Let
$({\bf{F.}},\delta.)$ be the minimal free resolution of $R/I$,
$({\bf{G.}},d.)$ be the minimal free resolution of $R/(I : h_1)$
shifted by $\deg(h_1)$   and $\psi: {\bf{G.}}\to {\bf{F.}}$ be the
complex graded homomorphism
 which is a
lifting of the map $ R/(I : h_1)(-(\deg(h_1)) \to R/I$. Since
$I:f=I:h_1$, if we denote by $({\bf{G'.}},d'.)$ the shifted  by
$deg(h_2)$ of the graded complex $({\bf{G.}},d.)$, clearly we get
the minimal free resolution of $R/(I : f)$ shifted by $d$. Moreover
$\psi'=h_2\psi: {\bf{G'.}}\to {\bf{F.}}$ is the complex graded
homomorphism
 which is a
lifting of the map $ R/(I : u)(-d) \to R/I$.

Let for each $r$, $M_r$ (resp. $N_r$) be the matrix of $\delta_r$
(resp. $d'_r$) with respect to the canonical basis of $F_r$ and
$F_{r-1}$ (resp. $G'_r$ and $G'_{r-1}$). Also assume that for each
$r$, $O_r$ be the matrix of $\psi'_r: G'_r\to F_r$. Then, by mapping
cone construction, the matrix of $\gamma_r$, with respect to the
canonical  basis of $F_r\oplus G'_{r-1}$ and  $F_{r-1}\oplus
G'_{r-2}$, is denoted by $M'_r$ has the following shape;

\begin{displaymath}
   M'_r =\left (\begin{array}{c|c}
       \begin{array}{cccc}
     M_r                 \end{array}
 &  \begin{array}{cccc}
  O_{r-1}            \end{array}

\\
             \hline
         \begin{array}{cccc}
     0                \end{array}
 &  \begin{array}{cccc }
    -N_{r-1}
         \end{array}  \end{array}\right)
   \end{displaymath}

So, the result of the mapping cone is the minimal free resolution if
and only if   $Im(\psi')\subset \m \bf{F.}$. This clearly holds
since $\psi'=h_2\psi$, and $h_2\in \m$.
\end{proof}

\begin{example}{\rm
Let $I=(x^3y^5,xy^5z^6)\subset R={\bf{k}}[x,y,z]$ and $f=xyz^7-z^9$.
Then $f=z^7(xy-z^2)$,
$I:f=((I:z^7):(xy-z^2))=((xy^5):xy-z^2)=(xy^5)=I:z^7$. So  Theorem
\ref{criteria1} shows that we can compute the minimal free resolution of $R/I+(f)$  by
the mapping cone technique. Note that $I$ is a monomial ideal generated by $x^3y^5$ and $xy^5z^6$. It is easy to see that the set $$\{(g_1,g_2)\in R^2\ ; \ g_1x^3y^5+g_2xy^5z^6=0\}$$ is the submodule of $R^2$ generated by $(z^6,-x^2)$. So the minimal free resolution of $R/I$ is $$0\to R(-14)\to R(-8)\oplus R(-12)\to R\to R/I\to 0.$$
It is also clear that the minimal free resolution of $R/I:f(-\deg(f))=R/(xy^5)(-9)$ is $$0\to R(-15)\to R(-9)\to R/I:f(-\deg(f))\to 0.$$
So, by the mapping cone, the minimal free resolution of $R/I+(f)$ is $$0\to R(-14)\oplus R(-15)\to R(-8)\oplus R(-12)\oplus R(-9)\to R\to R/I+(f)\to 0.$$}
\end{example}

We remark that  if $u=x_1^{\alpha_1}\cdots x_n^{\alpha_n}\in R$ is a
monomial, then $\supp(u)=\{i\ ; \ \alpha_i>0\}$ and
$\deg_{x_i}(u)=\alpha_i$.
 For a
monomial ideal $I$, the unique minimal system of generators for $I$
denoted by $G(I)$.
 In the following, when we write $I=(u_1,\ldots , u_m)$ it means
that $I$ is a monomial ideal and $G(I)=\{u_1,\ldots ,u_m\}$.

\begin{corollary}\label{criteria}
If $I$ is a monomial ideal of $R$ and $u$ is a monomial which does
not belong to $I$, then the minimal free resolution of $R/I+(u)$ is
given by the mapping cone technique provided that
$$\exists \ x_i\in \supp(u)\  {\text{such that}}\  \forall\  v\in
G(I)\ \deg_{x_i}(u)>\deg_{x_i}(v).$$
\end{corollary}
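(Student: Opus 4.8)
The plan is to deduce Corollary~\ref{criteria} directly from Theorem~\ref{criteria1} by exhibiting the required factorization $u=h_1h_2$. Concretely, suppose $x_i\in\supp(u)$ satisfies $\deg_{x_i}(u)>\deg_{x_i}(v)$ for every $v\in G(I)$, and write $a=\deg_{x_i}(u)\ge 1$. I would set $h_2=x_i$ and $h_1=u/x_i$, so that $\deg(h_2)=1>0$ and $h_1,h_2$ are homogeneous (monomial) polynomials with $u=h_1h_2$. It remains to check the colon-ideal hypothesis $(I:u)=(I:h_1)$, after which Theorem~\ref{criteria1} applies verbatim and gives the minimality of the mapping-cone resolution (together with parts (a)--(c), though the corollary only asserts minimality).

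The inclusion $(I:h_1)\subseteq(I:u)$ is immediate since $u=h_1x_i$, so $g\in(I:h_1)$ implies $gu=g h_1 x_i\in I$. For the reverse inclusion I would argue monomial-by-monomial: since $I$ and $u$ are monomial, $(I:u)$ and $(I:h_1)$ are monomial ideals, so it suffices to show that every monomial $w$ with $wu\in I$ already satisfies $wh_1\in I$. If $wu\in I$, then some generator $v\in G(I)$ divides $wu$. The key point is that $\deg_{x_i}(v)\le a-1=\deg_{x_i}(h_1)=\deg_{x_i}(u)-1$ by hypothesis, so $v\mid wu$ forces $v\mid w\cdot(u/x_i)=wh_1$: for every variable $x_\ell$ with $\ell\ne i$ we have $\deg_{x_\ell}(wh_1)=\deg_{x_\ell}(wu)\ge\deg_{x_\ell}(v)$, while for $x_i$ we have $\deg_{x_i}(wh_1)=\deg_{x_i}(wu)-1\ge\deg_{x_i}(v)$ because $\deg_{x_i}(wu)\ge\deg_{x_i}(v)+1$ (the latter since $\deg_{x_i}(wu)\ge\deg_{x_i}(u)=a>\deg_{x_i}(v)$ whenever $\deg_{x_i}(w)\ge 0$, noting $\deg_{x_i}(wu)\ge a$). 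Hence $v\mid wh_1$, so $wh_1\in I$, giving $(I:u)\subseteq(I:h_1)$ and therefore equality.

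The one subtlety worth spelling out — and the only place the hypothesis is genuinely used — is the strictness of the inequality $\deg_{x_i}(u)>\deg_{x_i}(v)$: it is exactly what guarantees that dividing off one copy of $x_i$ from $u$ cannot destroy divisibility of $wu$ by $v$, because $v$ never used all of the $x_i$-power available. I would also remark that $h_1=u/x_i$ need not be $1$ even when $u=x_i^a$, and that if $u$ is squarefree this recovers the familiar statement about "decreasing type" generators; in fact Corollary~\ref{criteria} is precisely the statement underlying the definition of ideals of decreasing type and hence Corollary~\ref{dec type}. No real obstacle is expected here: the entire content is the colon-ideal computation above, which is elementary once the factorization is chosen correctly.
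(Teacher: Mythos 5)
Your proposal is correct and follows essentially the same route as the paper: factor $u=(u/x_i)\cdot x_i$ and invoke Theorem~\ref{criteria1}, the only hypothesis to verify being $(I:u)=(I:u/x_i)$. The paper asserts this colon equality without proof, while you supply the (correct) monomial-by-monomial divisibility check, using the strict inequality $\deg_{x_i}(u)>\deg_{x_i}(v)$ exactly where it is needed.
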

\begin{proof}
By assumption, for some  $x_i\in \supp(u)$ we can decompose $u$ as
$u=u_1x_i$ such that $I:u=I:u_1$, So the result follows by Theorem
\ref{criteria1}.
\end{proof}

\begin{definition}\label{dec type def}\rm
Let $I=(u_1,\ldots ,u_m)$ be a monomial ideal. We say $I$ is {\it of
decreasing type}  with respect to the order $u_1,\ldots, u_m$ of its generators, if
for each $u_j$ there exists $x_i\in \supp(u_j)$ such that
$\deg_{x_i}(u_j)>\deg_{x_i}(u_r)$ for all $r<j$.
\end{definition}

For example if $I=(x_1x_2,x_2x_3,x_3^2,x_3x_4)\subset
{\bf{k}}[x_1,x_2,x_3,x_4]$, then $I$ is of decreasing type with
respect to $x_1x_2,x_2x_3,x_3^2,x_3x_4$.

Note that being of decreasing type depends to the ordering of the
generators and when we say $I=(u_1,\ldots,u_m)$ is of decreasing
type, it means that it is of decreasing type with respect to the order
$u_1,\ldots,u_m$.

 The following theorem is an immediate consequence
of Corollary \ref{criteria}.

\begin{corollary}\label{dec type}
Let $I=(u_1,\ldots ,u_m)$ be a monomial ideal of decreasing type.
Then the minimal free resolution of $I$ is given by iterated mapping
cone.
\end{corollary}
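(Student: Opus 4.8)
The plan is to proceed by induction on the number $m$ of generators of $I=(u_1,\ldots,u_m)$, using the filtration $I_j=(u_1,\ldots,u_j)$ that underlies the iterated mapping cone construction recalled in Section~2. The base case $m=1$ is trivial, since $R/(u_1)$ has the obvious Koszul-type free resolution $0\to R(-\deg u_1)\to R\to R/(u_1)\to 0$, which is minimal. For the inductive step, assume that the minimal free resolution of $R/I_{j-1}$ is already obtained by iterated mapping cone; I want to show the same for $R/I_j=R/(I_{j-1}+(u_j))$.

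First I would observe that the hypothesis that $I$ is of decreasing type (Definition~\ref{dec type def}) passes to the sub-ideal $I_{j-1}=(u_1,\ldots,u_{j-1})$ with respect to the same ordering of generators: indeed the defining condition for $u_r$ with $r<j$ only involves $u_1,\ldots,u_{r-1}$, so it is unaffected by discarding $u_j,\ldots,u_m$. This is what makes the induction run. Next, the decreasing-type condition applied to $u_j$ itself supplies a variable $x_i\in\supp(u_j)$ with $\deg_{x_i}(u_j)>\deg_{x_i}(u_r)$ for all $r<j$; this is precisely the hypothesis of Corollary~\ref{criteria} for the ideal $I_{j-1}$ and the monomial $u_j$. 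Note that $u_j\notin I_{j-1}$, since $G(I)=\{u_1,\ldots,u_m\}$ is the minimal generating set (if $u_j$ were in $I_{j-1}$, it would be divisible by some $u_r$ with $r<j$, forcing $\deg_{x_i}(u_j)\ge\deg_{x_i}(u_r)$ to fail for no variable—more directly, minimality of $G(I)$ forbids this).

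By Corollary~\ref{criteria}, the minimal free resolution of $R/I_j=R/(I_{j-1}+(u_j))$ is obtained from the minimal free resolutions of $R/I_{j-1}$ and of $R/(I_{j-1}:u_j)$ by a single mapping cone; and by the inductive hypothesis the former is itself built by iterated mapping cone. To invoke Corollary~\ref{criteria} I also need the minimal free resolution of $R/(I_{j-1}:u_j)$ to be available—but this colon ideal is again a monomial ideal (with generators the $u_r/\gcd(u_r,u_j)$, $r<j$, reduced to a minimal set), so abstractly its minimal resolution exists; the mapping cone recipe of Theorem~\ref{criteria1}(a) even records the Betti numbers explicitly, so no circularity arises. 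Splicing the mapping cone at stage $j$ on top of the iterated construction for $R/I_{j-1}$ yields an iterated-mapping-cone resolution of $R/I_j$ which is minimal at every stage, and at $j=m$ this is the desired statement for $R/I$ (and hence for $I$, up to the standard degree shift).

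The only genuine subtlety—what I would flag as the main point to get right—is the bookkeeping that the resolution is minimal \emph{after every iteration}, not merely at the last step: the mapping cone of two minimal complexes need not be minimal in general, and it is exactly Theorem~\ref{criteria1} (via Corollary~\ref{criteria}) that guarantees minimality at each stage because the comparison map $\psi'$ factors through $\mathfrak m$. So the proof is really just the remark that the hypotheses of Corollary~\ref{criteria} hold at every step of the filtration $I_1\subset I_2\subset\cdots\subset I_m$, which is an immediate unwinding of Definition~\ref{dec type def}; everything else is the inductive assembly.
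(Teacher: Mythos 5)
Your proof is correct and is essentially the paper's argument: the paper simply states the corollary as an immediate consequence of Corollary~\ref{criteria}, and your induction along the filtration $I_1\subset I_2\subset\cdots\subset I_m$, applying Corollary~\ref{criteria} at each step via the decreasing-type condition, is exactly the intended unwinding of that remark. Your flagged point about minimality holding after every iteration (not just the last) is the right thing to emphasize and is precisely what Theorem~\ref{criteria1} supplies.
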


In the next sections we apply Corollary \ref{criteria} and Theorem \ref{dec
type} in different situations to study the minimal free resolution
of some classes of monomial ideals.


\section{\bf Max-path ideals of rooted trees}

A tree is a graph in which there exists a unique path between every
pair of distinct vertices; a {\it rooted tree} is a tree together
with a fixed vertex called the root with the property that there
exists a unique path from the root to any given vertex. So a rooted
tree is a directed graph by assigning to each edge the direction
that goes away from the root.  Also an isolated vertex is
considered as a trivial rooted tree. If $\{i, j\}$ is an edge in a
rooted tree $T$, then we write $(i, j)$ for the directed edge
whose direction is from $i$ to $j$.  A directed path  is a sequence of distinct
vertices $ {i_1},\ldots,  {i_t}$ , in which $( {i_j} ,  {i_{j+1}})$
is the directed edge from $ {i_j}$ to $ {i_{j+1}}$ for any $j = 1,
\ldots , t - 1$.

we need the following definitions for a
rooted tree $T$.

\begin{definition}\rm
Let $T$ be a rooted tree.   A vertex $y$ is called a {\it child} of
x if $(x, y)$ is a directed edge in $T$.   A vertex $y \neq x$ is a
{\it descendant} of $x$ if there is a directed  path from $x$ to
$y$. The vertex $x$ is called a leaf of $T$ if $x$ has no child.
\end{definition}

\begin{definition}\rm
 Let $T$ be a rooted tree. An induced subtree
(or forest) of $T$ is a directed subtree (or forest) that is also an
induced subgraph of $T$. Let  $x$ be a vertex in $T$. The induced
subtree rooted at $x$ of $T$ is the induced subtree of $T$ on the
vertex set $\{x\} \cup \{y\  ;\  y\ {\text{is a descendant of}}\
x\}$.
\end{definition}

Next we define  and study the class of  max-path ideals of  rooted trees. This class of ideals  first defined  and studied in \cite{ShKhN} for an arbitrary  tree.   This class of ideals has interesting properties as we  see later.

\begin{definition}\rm
  Let $T$ be a rooted tree on the vertex set $[n]$. The {\it max-path ideal} of $T$ is
defined as
$$PI(T)=(x_{i_1}\cdots x_{i_t}\  ;\ {i_1} ,\ldots,  {i_t}{\text{ is
a maximal path in}} \ T) \subseteq R,$$
where by a {\it maximal path } we mean a directed path between the root of tree and one of its leaves.
\end{definition}

Here, we show that $PI(T)$ is a monomial ideal of
decreasing type and we study the numerical invariants of its minimal
free resolution by using Corollary \ref{dec type}.

\begin{theorem}\label{rooted tree main1}
Let $T$ be a rooted tree on the vertex set $[n]$. Then
\begin{itemize}
\item[(i)] The max-path ideal of $T$ is of decreasing type. So the
minimal free resolution of $R/PI(T)$ is obtained by the iterated
mapping cone.
\item[(ii)] $\dim(R/PI(T))=n-1$.
\item[(iii)] Let $m=$ The number of leaves of $T$. Then
\begin{itemize}
\item[(a)] $\beta_i(R/PI(T))={m\choose i}$.
\item[(b)]  $\pd(R/PI(T))=m$ and $\depth(R/PI(T))=n-m$.
\item[(c)]  $\reg(R/PI(T))=n-m$.
\end{itemize}
\item[(iv)]  $R/PI(T)$ is Cohen-Macaulay if and only if $T$ is a directed path.
\end{itemize}
\end{theorem}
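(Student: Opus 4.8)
The plan is to run through the four parts in order, reducing everything to combinatorial bookkeeping with the root-to-leaf paths of $T$. Enumerate the maximal paths as $P_1,\dots,P_m$ in any order (identifying each path with its vertex set), let $\ell_j$ be the leaf at which $P_j$ terminates, and set $u_j=\prod_{v\in P_j}x_v$, so that $PI(T)=(u_1,\dots,u_m)$. The one fact used repeatedly is that a leaf lies on no maximal path other than the one ending at it: if $\ell_j\in P_r$ with $r\ne j$, then $\ell_j$ would have a child in $P_r$. Hence $x_{\ell_j}\mid u_j$ but $x_{\ell_j}\nmid u_r$ for every $r\ne j$; in particular no $u_r$ divides another, so $\{u_1,\dots,u_m\}=G(PI(T))$, and for each $j$ the variable $x_{\ell_j}$ gives $\deg_{x_{\ell_j}}(u_j)=1>0=\deg_{x_{\ell_j}}(u_r)$ for all $r<j$. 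Thus $PI(T)$ is of decreasing type with respect to this (hence any) ordering, and Corollary \ref{dec type} yields (i).

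For (ii): the root $\rho$ lies on every maximal path, so $x_\rho$ divides every $u_j$ and $PI(T)\subseteq(x_\rho)$; as $PI(T)$ is a nonzero ideal contained in the height-one prime $(x_\rho)$, we get $\height PI(T)=1$ and $\dim R/PI(T)=n-1$. For (iii) I would derive all three assertions from the fact that the Taylor resolution of $PI(T)$ is \emph{minimal}. For $\varnothing\ne\sigma\subseteq[m]$ the Taylor multidegree of the corresponding basis element is the squarefree monomial supported on $\bigcup_{r\in\sigma}P_r$, of degree $|\bigcup_{r\in\sigma}P_r|$; for any $j\in\sigma$, since $\ell_j\in P_j$ but $\ell_j\notin P_r$ for $r\ne j$, deleting $j$ from $\sigma$ strictly shrinks this union, so every entry of the Taylor differential lies in $\m$. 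Therefore the Taylor complex is the minimal free resolution, $\beta_i(R/PI(T))=\binom mi$, and $\beta_{i,j}(R/PI(T))$ equals the number of $i$-subsets $\sigma$ of $[m]$ with $|\bigcup_{r\in\sigma}P_r|=j$; this is (a). (Alternatively one can iterate the mapping cone of part (i): the colon $(I_{j-1}:u_j)$ is minimally generated by the $j-1$ monomials $u_r/\gcd(u_r,u_j)$, $r<j$, which are pairwise nondividing by the same leaf argument, so its Taylor resolution is again minimal with Betti numbers $\binom{j-1}i$, and Theorem \ref{criteria1}(a) together with the hockey-stick identity reassembles $\binom mi$.)

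Since every vertex of $T$ lies on some maximal path, $\bigcup_{r\in[m]}P_r$ is all of $T$; and if $\sigma\subsetneq[m]$, then a leaf $\ell_{r_0}$ with $r_0\notin\sigma$ is missing from $\bigcup_{r\in\sigma}P_r$. So $\sigma=[m]$ is the unique subset whose union has size $n$, whence $\beta_{m,n}(R/PI(T))=1$ while $\beta_i=0$ for $i>m$; thus $\pd(R/PI(T))=m$ and, by Auslander--Buchsbaum, $\depth(R/PI(T))=n-m$, which is (b). For (c) we have $\reg(R/PI(T))=\max\{\,|\bigcup_{r\in\sigma}P_r|-|\sigma| : \varnothing\ne\sigma\subseteq[m]\,\}$; the value $n-m$ is attained at $\sigma=[m]$, and for an arbitrary $k$-subset $\sigma$ the only leaves of $T$ lying in $U:=\bigcup_{r\in\sigma}P_r$ are the $\ell_r$ with $r\in\sigma$, so the remaining $m-k$ leaves of $T$ are vertices outside $U$ and $|U|\le n-(m-k)$, i.e. $|U|-|\sigma|\le n-m$. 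This proves (c). Finally (iv) follows immediately: if $T$ is a directed path then $m=1$ and $PI(T)=(x_1\cdots x_n)$ is principal, so $R/PI(T)$ is a complete intersection, hence Cohen--Macaulay; conversely, if $R/PI(T)$ is Cohen--Macaulay then $n-m=\depth=\dim=n-1$ by (ii) and (b), so $m=1$, $T$ has a single leaf, and $T$ is a directed path.

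The only step carrying real content is the minimality of the Taylor complex --- equivalently, the control of the colon ideals $(I_{j-1}:u_j)$ along the mapping cone of part (i) --- and this is precisely what the principle ``a leaf belongs to a unique maximal path'' delivers; for the regularity the one point to get right is the inequality $|\bigcup_{r\in\sigma}P_r|-|\sigma|\le n-m$, which the leaf count above makes transparent. Everything else is routine.
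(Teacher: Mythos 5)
Your proof is correct, and for part (iii) it takes a genuinely different route from the paper; parts (i), (ii) and (iv) are essentially the paper's own arguments (the leaf $\ell_j$ occurs only in $u_j$, the root variable gives a height-one minimal prime containing $PI(T)$, and Cohen--Macaulayness forces $m=1$ by comparing (ii) with (iii)(b)). For (iii), the paper inducts on the number of leaves: it strips off one maximal path $u$, identifies $PI(T'):u$ with a sum $\sum_l PI(T_l)$ of max-path ideals of induced subtrees on pairwise disjoint variable sets, so that $R/(PI(T'):u)$ is a tensor product whose Betti numbers, projective dimension and regularity are known by induction, and then applies Theorem \ref{criteria1} to the mapping cone. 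You instead observe that each generator $u_j$ has a private variable $x_{\ell_j}$, so every entry of the Taylor differential is a monomial of positive degree and the Taylor complex is already the minimal free resolution; this immediately gives $\beta_i=\binom{m}{i}$ and $\pd=m$, and even the finer graded data $\beta_{i,j}=\#\{\sigma\subseteq[m] : |\sigma|=i,\ |\bigcup_{r\in\sigma}P_r|=j\}$, from which your leaf count $|\bigcup_{r\in\sigma}P_r|-|\sigma|\le n-m$ yields $\reg(R/PI(T))=n-m$ directly. Your route is shorter, avoids both the induction and the colon computation, and produces (multi)graded Betti numbers beyond what the theorem asks; the paper's route, on the other hand, showcases exactly the decreasing-type/mapping-cone machinery of Corollary \ref{dec type} that the rest of the paper is built on and exhibits the recursive structure of the colon ideals $PI(T'):u$. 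One cosmetic remark: in your parenthetical alternative, the identity $\binom{j-1}{i}+\binom{j-1}{i-1}=\binom{j}{i}$ is Pascal's rule rather than the hockey-stick identity; nothing depends on this, since your main argument is complete without it.
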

\begin{proof}
(i): Let $ 1$ be the root of $T$ and $L(T)=\{ {i_1},\ldots, {i_m}\}$ be
the set of  leaves of $T$. For each $1\leq j\leq m$, let $u_j$ be
the monomial corresponding to the maximal path from $1$ to $ {i_j}$.
It is clear that $\deg_{x_{i_r}}(u_r)> \deg_{x_{i_r}}(u_j)$ for each
$j\neq r$. So,  $PI(T)$ is a monomial ideal of decreasing type and
by Corollary \ref{dec type},  the minimal free resolution of $R/PI(T)$
is obtained by the iterated mapping cone.

(ii): By definition of $PI(T)$, it is clear that $(x_1)$ is an associated prime of $PI(T)$. So $\dim (R/PI(T))=n-1$.

(iii): By induction on $m$ and using the mapping cone technique we
compute the desired formulas.

Let $m=1$ and $ {i_1}$ be the only leaf of $T$. So $T$ is just a
directed path and $PI(T)$ is a principle monomial ideal. So it is
clear that $\beta_i(R/PI(T))={1\choose i}$, $\pd(R/PI(T))=1$,
$\depth(R/PI(T))=n-1$ and $\reg(R/PI(T))=n-1$.

Now assume that the result is true for each rooted tree whose number
of leaves are less than $m$ and assume that $T$ is a rooted tree
with $m$ leaves. Let $u=x_{r_1}\cdots x_{r_k}\in G(I)$ where $
{r_1}= 1$ is the root of $T$, each $ {r_j}$ is a child of $
{r_{j-1}}$ and $r_k=i_m$ is a leaf. Then $PI(T)=PI(T')+(u)$ where
$T'$ is the rooted tree  that $G(PI(T'))=G(PI(T))\setminus \{
x_{r_1}\cdots x_{r_k}\}$. Note that $L(T')=\{  {i_1},\ldots,
 {i_{m-1}}\}$ and $V(T')\subseteq V(T)\setminus \{ {i_m}\}$. For
each $1\leq j\leq k-1$, let $C_j=\{x\in V(T)\ ; \ x\ {\text {is a
child of}}\  {r_j}\}\setminus \{ {r_{j+1}}\}$ and
$C=\cup_{j=1}^{k-1} C_j$.

It is easy to see that $PI(T'):u=\sum_{l=1}^\ell PI(T_l)$ where
$\ell=|C|$ and each $T_l$ is an induced subtree rooted at a vertex
of $C$. Moreover $\cup_{l=1}^\ell L(T_l)=L(T')$ and $\cup_{l=1}^\ell
V(T_l)=V(T)\setminus \{ {r_1},\ldots , {r_k}\}$.

Now let  $R_0={\bf{k}}[x_i\ ; \ i\in [n]\setminus \cup_{l=1}^\ell
V(T_l)]$ and for each $1\leq l\leq \ell$, $R_l={\bf{k}}[x_i\ ; \
i\in V(T_l)]$ . Then
$$R/(PI(T'):u)=R_0\bigotimes (\bigotimes_{l=1}^\ell R_l/PI(T_l)).$$

 By induction hypothesis we have:

\begin{align*}
\beta_i(R/(PI(T'):u))&=\sum_{l_1+\cdots+l_\ell=i}\beta_{l_1}(R_1/PI(T_1))\times\cdots\times\beta_{l_\ell}(R_\ell/PI(T_\ell)) \nonumber\\
&=\sum_{l_1+\cdots+l_\ell=i}{|L(T_1)|\choose l_1}\times \cdots
\times{|L(T_\ell)|\choose l_\ell} \nonumber\\
&={\sum_{l=1}^\ell |L(T_l)|\choose
i}={|L(T)|-1\choose i}\nonumber\\
&={m-1\choose i},
\end{align*}

\begin{align*}
\pd(R/(PI(T'):u))&=\sum_{l=1}^\ell
\pd(R_l/PI(T_l)) \nonumber\\
&=\sum_{l=1}^\ell |L(T_l)|\nonumber\\
&=|L(T)|-1\nonumber\\
&=m-1,
\end{align*}
and

\begin{align*}
\reg(R/(PI(T'):u))&=\sum_{l=1}^\ell\reg(R_l/PI(T_l)) \nonumber\\
&=\sum_{l=1}^\ell
(|V(T_l)|-|L(T_l)|)\nonumber\\
&=n-k-(m-1).
\end{align*}
 Also, for $R/PI(T')$ we have

 $$\beta_i(R/(PI(T')))={|L(T')|\choose i}={m-1\choose i},$$

$$\pd(R/(PI(T')))=|L(T')|=m-1,$$ and
$$\reg(R/(PI(T')))=|V(T')|-(m-1)\leq n-1-(m-1)\leq n-m.$$

Now we apply mapping cone to the short exact sequence
$$0\to R/(PI(T'):u)(-k)\to R/(PI(T'))\to R/(PI(T))\to 0.$$
By Theorem \ref{criteria1} we get

\begin{align*}
\beta_i(R/(PI(T))&=\beta_i(R/(PI(T'))+ \beta_{i-1}(R/(PI(T'):u)) \nonumber\\
&={m-1\choose i}+{m-1\choose
i-1}\nonumber\\
&={m\choose i},
\end{align*}
\begin{align*}\reg(R/(PI(T)))&=\max\{\reg(R/(PI(T'))),\reg(R/(PI(T'):u))+k-1\}\nonumber\\
&=n-m,\end{align*}
and
\begin{align*}\pd(R/(PI(T)))&=\max\{\pd(R/(PI(T'))),\pd(R/(PI(T'):u))+1\}\nonumber\\
&=m.\end{align*} So
the result follows.

(iv): By parts (ii) and (iii), $R/PI(T)$ is Cohen-Macaulay if and only if $m=1$. So the result is clear.
\end{proof}

In the following we are going to find some nice properties of $PI(T)$. We first need to recall the definition  of a  simplicial tree. Simplicial trees have the nice property that whose facet ideals are sequentially Cohen-Macaulay (see \cite[Corollary 5.6]{F}).
\begin{definition}\rm

A {\it simplicial complex} $\Delta$ on the vertex set $V (\Delta) =
\{x_1,\ldots ,x_n\}$ is a collection of subsets of $V (\Delta)$ such
that if $F\in \Delta$ and $G \subset F$, then $G\in \Delta$. An
element in $\Delta$ is called a {\it face} of $\Delta$, and $F \in
\Delta$ is said to be a facet if $ F$ is maximal with respect to the
inclusion. Let $F_1,\ldots, F_q$ be all the {\it facets} of a
simplicial complex $\Delta$, we write $\Delta=\langle F_1,\ldots
,F_q\rangle$.

The facet ideal of $\Delta$ is
$$I(\Delta)=(\prod_{x\in F}x\ ; \ F\ {\text {is a facet of}}\
\Delta).$$

Let $T$ be a rooted tree. Then $PI(T)$ can be considered as the
facet ideal of the following simplicial complex

$$\Delta_{PI(T)}=\langle \{x_{r_1},\ldots,x_{r_k}\}\ ; \
 {r_1},\ldots, {r_k} \ {\text{is a maximal path of }} \ T\rangle$$

A leaf of a simplicial complex $\Delta$ is a facet $F$ of $\Delta$
such that either $F$ is the only facet of $\Delta$, or there exists
a facet $G$ in $\Delta$, $G \neq F$, such that $F\cap F'\subseteq
F\cap G$ for every facet $F'\in\Delta$, $F'\neq F$. A simplicial
complex $\Delta$ is a called {\it simplicial tree} if $\Delta$ is
connected and every non-empty subcomplex $\Delta'$ contains a leaf.
By a subcomplex, we mean any simplicial complex of the form
$\Delta'=\langle F_{i_1},\ldots , F_{i_q}\rangle$ , where $\{F_{i_1}
, \ldots, F_{i_q} \}$ is a subset of the set of all facets of
$\Delta$.
\end{definition}

We next see that $R/PI(T)$ is sequentially Cohen-Macauly. This  is an immediate consequent of the following theorem which shows that  $\Delta_{PI(T)}$ is a simplicial tree.

\begin{theorem}\label{simplicial tree}
Let $T$ be a rooted tree. Then $\Delta_{PI(T)}$ is a simplicial tree
\end{theorem}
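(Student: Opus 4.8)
The plan is to verify the definition of a simplicial tree directly: $\Delta_{PI(T)}$ is connected, and every nonempty subcomplex has a leaf. Connectedness is immediate because all facets of $\Delta_{PI(T)}$ correspond to maximal paths starting at the root $1$, so every facet contains the vertex $x_1$; any two facets therefore intersect, and the complex is connected. For the leaf condition, I would argue by identifying, inside an arbitrary subcomplex, the facet coming from a "deepest" leaf of $T$. More precisely, let $\Delta' = \langle F_{i_1},\ldots,F_{i_q}\rangle$ be a nonempty subcomplex, where each $F_{i_s}$ is the vertex set of a maximal path $P_{i_s}$ from the root to a leaf $\ell_{i_s}$ of $T$. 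Among the leaves $\ell_{i_1},\ldots,\ell_{i_q}$, pick one, say $\ell_{i_1}$, whose distance from the root is maximal (break ties arbitrarily), and let $F = F_{i_1}$ be the corresponding facet. I claim $F$ is a leaf of $\Delta'$.

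The key step is to exhibit a facet $G \in \Delta'$, $G \neq F$ (when $q \geq 2$), such that $F \cap F' \subseteq F \cap G$ for every facet $F' \in \Delta'$. The natural candidate is obtained as follows: since $\ell_{i_1}$ is a leaf of $T$ at maximal depth among the chosen leaves, the path $P_{i_1}$ from $1$ to $\ell_{i_1}$ meets any other path $P_{i_s}$ in an initial segment — indeed, because $T$ is a tree, the intersection of the two directed paths from the root is itself a directed path from the root, namely the path from $1$ to the last common vertex $v_s$ of $P_{i_1}$ and $P_{i_s}$. Let $v$ be the vertex among these $v_s$ (for $s = 2,\ldots,q$) that lies deepest along $P_{i_1}$; choose $G = F_{i_s}$ to be a facet realizing this deepest $v$. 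Then for every other facet $F_{i_t}$ we have $F \cap F_{i_t} = \{x : x$ on the sub-path of $P_{i_1}$ from $1$ to $v_t\}$, which is contained in $\{x : x$ on the sub-path from $1$ to $v\} = F \cap G$ since $v_t$ is no deeper than $v$ on $P_{i_1}$. This is exactly the leaf condition. The maximal-depth choice of $\ell_{i_1}$ is what guarantees $G \neq F$: if $P_{i_s}$ contained all of $P_{i_1}$ then $\ell_{i_1}$ would lie on $P_{i_s}$ and hence would not be a leaf unless $\ell_{i_1} = \ell_{i_s}$, i.e. $F_{i_s} = F$; but distinct facets correspond to distinct leaves, so for $s$ with $F_{i_s} \neq F$ the vertex $v_s$ is a proper prefix and $v$ is strictly above $\ell_{i_1}$, forcing $G \neq F$.

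I expect the main obstacle to be the bookkeeping around the claim that in a tree the intersection of two root-to-leaf directed paths is a root-initial sub-path, and the careful verification that the facet $G$ chosen by "deepest common branch point with $P_{i_1}$" genuinely dominates all the intersections $F \cap F'$. This is a structural fact about trees — the unique-path property forces two directed paths from the root to agree on a prefix and then split and never re-merge — and once it is stated cleanly the containment $F \cap F' \subseteq F \cap G$ is just comparing nested prefixes of the single path $P_{i_1}$. One should also handle the degenerate case $q = 1$ separately, where $F$ is the only facet of $\Delta'$ and is therefore a leaf by definition, and note that the argument applies verbatim to every nonempty subcomplex (not merely to $\Delta_{PI(T)}$ itself), which is precisely what the definition of simplicial tree demands. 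Having shown connectedness plus the leaf condition for all nonempty subcomplexes, the theorem follows; combined with \cite[Corollary 5.6]{F} and Theorem~\ref{dual}(2) this will then yield, as promised, that $R/PI(T)$ is sequentially Cohen-Macaulay and $PI(T)^\vee$ is componentwise linear.
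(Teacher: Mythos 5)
Your proof is correct, and it rests on the same structural fact as the paper's: two root-to-leaf paths in a rooted tree agree on an initial segment and never re-merge, so the intersections $F\cap F'$ are nested prefixes of the path $P$ defining $F$, and the facet whose path shares the deepest common vertex with $P$ dominates all the others. The organization differs, however. The paper first proves the stronger statement that \emph{every} facet of $\Delta_{PI(T)}$ is a leaf, choosing $G$ to branch off at the deepest vertex $r_\ell$ of $P$ having a child outside $P$, and then disposes of an arbitrary subcomplex $\Delta'=\langle F_{i_1},\ldots,F_{i_q}\rangle$ by the reduction $\Delta'=\Delta_{PI(T')}$, where $T'$ is the induced subtree of $T$ on $V(\Delta')$ (its leaves are exactly the leaves $\ell_{i_1},\ldots,\ell_{i_q}$), so the full-complex case applies verbatim. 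You instead argue directly inside $\Delta'$ and exhibit a single leaf, which avoids the induced-subtree reduction and is all the definition requires. Two small points: the choice of the deepest leaf $\ell_{i_1}$ is actually unnecessary — your argument shows every facet of $\Delta'$ is a leaf, since $G$ is by construction selected among the facets other than $F$, so $G\neq F$ is automatic, and no facet's path can contain another's because the endpoint of a maximal path is a leaf of $T$ and hence lies on no other maximal path; also, in the leaf condition the quantifier should run over facets $F'\neq F$, as in the definition. Connectedness, which you settle by noting that all facets contain the root vertex, is indeed immediate and is left implicit in the paper.
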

\begin{proof}
We show that each facet of $\Delta_{PI(T)}$ is a leaf. Let $P:
{r_1},\ldots, {r_k}$ be a maximal path of $T$ where $ {r_1}= 1$ is
the root of $T$ and each $ {r_j}$ is a child of $ {r_{j-1}}$. So $
{r_k}$ is a leaf of $T$. Let $F$ be the  facet of $\Delta_{PI(T)}$
corresponding to $P$. For each $1\leq i\leq k-1$, let $C_i=\{x\ ; \
x$ is a child of $ {r_{i}}\}\setminus \{ {r_{i+1}}\}$ and
$\ell=\max\{i\ ; \ C_i\neq \emptyset\}$. Let $G$ be the facet
corresponding to a maximal path $P': {r_1},\ldots, {r_\ell},
{r'_{\ell+1}},\ldots, {r'_{k'}}$ where $ {r'_{\ell+1}}$ is a child
of $ {r_\ell}$, $ {r'_{\ell+1}}\neq  {r_{\ell+1}}$ and each $
{r'_j}$ is a child of $ {r'_{j-1}}$. It is easy to see that $F\cap
F'\subseteq F\cap G$ for every facet $F'\in\Delta_{PI(T)}$, $F'\neq
F$.

Now let $\Delta'=\langle F_{i_1},\ldots , F_{i_q}\rangle$ be a
subcomplex of $\Delta_{PI(T)}$ and $V'=V(\Delta')$. If $T'$ is the
induced subtree of $T$ on the vertex set $V'$, then
$\Delta'=\Delta_{PI(T')}$. So by the previous paragraph, each facet
of $\Delta'$ is a leaf. So $\Delta_{PI(T)}$ is a simplicial tree.
\end{proof}

\begin{corollary}\label{rooted tree main 2}
 Let $T$ be a rooted tree. Then\begin{itemize}
    \item  $R/PI(T)$ is sequentially
 Cohen-Macauly.
    \item  $PI(T)^\vee$ is componentwise linear.
    \item
    $\reg(PI(T)^\vee)=m$ where $m$ is the number of leaves in $T$.
    \item $\pd(PI(T)^\vee)=n-m$.
 \end{itemize}
\end{corollary}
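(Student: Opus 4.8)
The plan is to deduce everything from Theorem \ref{simplicial tree} together with the results recalled in Section 2. First, since $\Delta_{PI(T)}$ is a simplicial tree by Theorem \ref{simplicial tree}, it is in particular a connected simplicial complex whose facet ideal is $PI(T)$; by \cite[Corollary 5.6]{F} the facet ideal of a simplicial tree is sequentially Cohen-Macaulay, hence $R/PI(T)$ is sequentially Cohen-Macaulay. This gives the first bullet.

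For the second bullet, I would invoke Theorem \ref{dual}(2): $R/PI(T)$ is sequentially Cohen-Macaulay if and only if $(PI(T))^\vee$ is componentwise linear. Since $PI(T)$ is a squarefree monomial ideal (its generators are products of distinct variables along maximal paths), the Alexander dual $(PI(T))^\vee$ is defined, and the equivalence applies directly, yielding that $(PI(T))^\vee$ is componentwise linear.

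For the third bullet, I would use Theorem \ref{dual}(1), namely $\pd(R/PI(T)) = \reg((PI(T))^\vee)$. By Theorem \ref{rooted tree main1}(iii)(b) we already know $\pd(R/PI(T)) = m$, where $m$ is the number of leaves of $T$; combining these two facts gives $\reg((PI(T))^\vee) = m$. For the fourth bullet, I would similarly apply Theorem \ref{dual}(1) with the roles reversed: since $(PI(T))^\vee$ is again a squarefree monomial ideal and $\left((PI(T))^\vee\right)^\vee = PI(T)$, one has $\pd\bigl(R/(PI(T))^\vee\bigr) = \reg\bigl(\left((PI(T))^\vee\right)^\vee\bigr) = \reg(PI(T))$. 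It remains to identify $\reg(PI(T))$; from the short exact sequence $0 \to PI(T) \to R \to R/PI(T) \to 0$ one has $\reg(PI(T)) = \reg(R/PI(T)) + 1 = (n-m) + 1$, using Theorem \ref{rooted tree main1}(iii)(c). Then $\pd((PI(T))^\vee) = \pd(R/(PI(T))^\vee) - 1 = \reg(PI(T)) - 1 = n - m$.

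The only subtle point I anticipate is bookkeeping around the shift by one between $\reg$ of an ideal and $\reg$ of the quotient, and between $\pd$ of a module and $\pd$ of the quotient by it; I would state these shifts explicitly to avoid off-by-one errors. Everything else is a direct citation of Theorem \ref{simplicial tree}, \cite[Corollary 5.6]{F}, Theorem \ref{dual}, and the numerical data already established in Theorem \ref{rooted tree main1}, so there is no real obstacle beyond assembling these pieces carefully.
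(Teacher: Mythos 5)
Your proposal is correct and follows essentially the same route as the paper: Theorem \ref{simplicial tree} plus \cite[Corollary 5.6]{F} for sequential Cohen--Macaulayness, Theorem \ref{dual}(2) for componentwise linearity, and Theorem \ref{dual}(1) applied to both $PI(T)$ and $PI(T)^\vee$ (using $PI(T)^{\vee\vee}=PI(T)$) together with the numerical data of Theorem \ref{rooted tree main1} for the last two bullets. Your explicit bookkeeping of the shifts $\reg(PI(T))=\reg(R/PI(T))+1$ and $\pd(PI(T)^\vee)=\pd(R/PI(T)^\vee)-1$ is exactly the detail the paper leaves implicit, and it is handled correctly.
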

\begin{proof}
 By  Theorem \ref{simplicial tree}, $\Delta_{PI(T)}$ is a simplicial tree and
 $PI(T)=I(\Delta_{PI(T)})$. By (\cite[Corollary 5.6]{F}), $R/PI(T)$ is sequentially
 Cohen-Macauly. Other parts follows by Theorem \ref{dual} and the
 fact that $PI(T)^{\vee\vee}=PI(T)$.
\end{proof}

\section{\bf Monomial ideals containing some powers of variables}

Let $J$ be a monomial ideal and $I=J+ ( x_{i_1}^{a_{i_1}},\ldots,x_{i_m}^{a_{i_m}})$ where $a_{i_j}$ are positive integers and $G(I)=G(J)\cup \{x_{i_1}^{a_{i_1}},\ldots,x_{i_m}^{a_{i_m}}\}$.
 In this
section we are going to study the minimal free resolution of
$R/I$ using Theorem \ref{criteria1}.

First, we compute the graded Betti numbers of
$R/I$ in terms of the graded Betti numbers of $R/J$
and the graded Betti numbers of some other modules associated  to
$R/J$. This result has been proved in \cite{MPS} by applying mapping cone
to a long exact sequence. Here, using Theorem
\ref{criteria1}, we give an easier proof with more details for it. Next we focus to the case that $J$ is a square-free monomial ideal.

\begin{theorem}\label{gereralized peeva} Let
$J$ be a monomial ideal, $I=J+ ( x_{i_1}^{a_{i_1}},\ldots,x_{i_m}^{a_{i_m}})$  and $G(I)=G(J)\cup \{x_{i_1}^{a_{i_1}},\ldots,x_{i_m}^{a_{i_m}}\}$. Then
\begin{enumerate}
\item The minimal free resolution of $R/I$ is obtained by iterated
mapping cone starting from the minimal free resolution of $R/J$.
\item
$$\dim(R/I)\leq n-m,$$
\item
\begin{align}\label{betti}
\beta_{i,j}(R/I)&=\sum_{r=0}^m\sum_{|\sigma|=r}\beta_{i-r,
j-\ell_\sigma}(R/(J:\prod_{j\in \sigma}x_j^{a_j}))\nonumber\\
& {\text {where}}\
\sigma\subseteq \{i_1,\ldots,i_m\},\ \ell_\sigma=\sum_{t\in
\sigma}a_t.\end{align}
\end{enumerate}
\end{theorem}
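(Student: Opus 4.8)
The plan is to induct on $m$, the number of pure-power generators we adjoin to $J$, using Theorem \ref{criteria1} at each step. For $m=0$ there is nothing to prove. Suppose the statement holds for $J + (x_{i_1}^{a_{i_1}}, \dots, x_{i_{m-1}}^{a_{i_{m-1}}})$; call this ideal $I'$, and set $f = x_{i_m}^{a_{i_m}}$ and $I = I' + (f)$. Since $x_{i_m}$ does not divide any generator of $I'$ to the power $a_{i_m}$ (the generators of $J$ are arbitrary monomials, but $f \notin I'$ by the hypothesis $G(I) = G(J) \cup \{\text{pure powers}\}$, which forces $f$ to be a new generator), we may write $f = h_1 h_2$ with $h_1 = x_{i_m}^{a_{i_m}-1}$ and $h_2 = x_{i_m}$, and I must check the key hypothesis $(I' : f) = (I' : h_1)$. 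This is where the pure-power structure is essential: $(I' : x_{i_m}^{a_{i_m}})$ and $(I' : x_{i_m}^{a_{i_m}-1})$ agree because no generator of $I'$ has $x_{i_m}$-degree exceeding $a_{i_m}-1$ — indeed $J$'s generators may involve $x_{i_m}$, but raising the colon exponent past $a_{i_m}-1$ does not change anything once we have already accounted for the pure powers $x_{i_j}^{a_{i_j}}$ with $j < m$, which don't involve $x_{i_m}$ at all. Here I should be slightly careful: strictly speaking, for Corollary \ref{criteria} to apply verbatim one wants $\deg_{x_{i_m}}$ of every generator of $I'$ to be $< a_{i_m}$; since some generator of $J$ could a priori have $x_{i_m}$-degree $\geq a_{i_m}$, but then $f = x_{i_m}^{a_{i_m}}$ would be redundant, contradicting $f \in G(I)$. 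So in fact all generators of $I'$ have $x_{i_m}$-degree $< a_{i_m}$, and Corollary \ref{criteria} applies directly.

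Given this, Theorem \ref{criteria1} tells me the minimal free resolution of $R/I$ is the mapping cone of the minimal resolution of $R/(I':f)$ (shifted) mapping to that of $R/I'$. For part (1), I need to know that $R/(I':f)$ itself has its minimal resolution built by iterated mapping cone starting from $R/J$. Computing $I' : x_{i_m}^{a_{i_m}} = (J : x_{i_m}^{a_{i_m}}) + (x_{i_1}^{a_{i_1}}, \dots, x_{i_{m-1}}^{a_{i_{m-1}}})$, since the pure powers $x_{i_j}^{a_{i_j}}$ for $j<m$ are coprime to $x_{i_m}$. This is again an ideal of exactly the same shape — a monomial ideal $J : x_{i_m}^{a_{i_m}}$ plus pure powers in $m-1$ of the remaining variables — so the inductive hypothesis applies to it (after noting $J : x_{i_m}^{a_{i_m}}$ is a monomial ideal and the pure powers are genuinely new generators or else absorbed, in which case $m$ drops further, which is fine). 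Thus both $R/I'$ and $R/(I':f)$ have iterated-mapping-cone minimal resolutions, and splicing one more mapping cone on top proves (1).

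For part (3), the formula $\beta_{i,j}(R/I) = \sum_{r=0}^m \sum_{|\sigma|=r} \beta_{i-r, j-\ell_\sigma}(R/(J : \prod_{t\in\sigma} x_t^{a_t}))$ follows by feeding the inductive hypothesis into the Betti-number recursion of Theorem \ref{criteria1}(a): $\beta_{i,j}(R/I) = \beta_{i,j}(R/I') + \beta_{i-1, j-a_{i_m}}(R/(I':f))$. Write $I'$ for the $m-1$ case and apply induction to both summands; the first contributes all subsets $\sigma \subseteq \{i_1,\dots,i_{m-1}\}$, and the second, after the degree shift $j \mapsto j - a_{i_m}$ and homological shift $i \mapsto i-1$, contributes exactly the subsets containing $i_m$, namely $\sigma = \tau \cup \{i_m\}$ with $\tau \subseteq \{i_1,\dots,i_{m-1}\}$, using $(J : \prod_{t\in\tau} x_t^{a_t} \cdot x_{i_m}^{a_{i_m}}) = ((J:x_{i_m}^{a_{i_m}}) : \prod_{t\in\tau}x_t^{a_t})$ and $\ell_{\tau\cup\{i_m\}} = \ell_\tau + a_{i_m}$. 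The two cases together range over all subsets of $\{i_1,\dots,i_m\}$, giving the claimed sum. Part (2), the bound $\dim(R/I) \leq n-m$, follows because $I$ contains $(x_{i_1}^{a_{i_1}},\dots,x_{i_m}^{a_{i_m}})$, whose radical is $(x_{i_1},\dots,x_{i_m})$, so $\sqrt{I} \supseteq (x_{i_1},\dots,x_{i_m})$ and hence $\dim(R/I) = \dim(R/\sqrt{I}) \leq \dim(R/(x_{i_1},\dots,x_{i_m})) = n-m$ — this part does not even need the mapping cone.

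The main obstacle I anticipate is the bookkeeping around degeneracies: when $J : x_{i_m}^{a_{i_m}}$ already contains some of the pure powers $x_{i_j}^{a_{i_j}}$ (or a variable $x_{i_j}$ to a lower power), the set of "new" generators shrinks and one must be sure the inductive hypothesis is still being applied to an ideal genuinely of the stated form with the book-kept index set; and when verifying $(I':f)=(I':h_1)$ one must rule out the pathological case where $f$ is redundant, which I handle by invoking $f \in G(I)$. Once these degenerate cases are dispatched — essentially by observing that in each the problem reduces to a smaller instance covered by induction, possibly with $\beta_{i-r,j-\ell_\sigma}(R/(J:\cdots)) = 0$ or with collapsing subsets contributing zero — the argument is a routine double induction.
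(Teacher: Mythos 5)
Your proposal is correct and follows essentially the same route as the paper: induction on $m$, writing $I=I'+(x_{i_m}^{a_{i_m}})$ with $I'=J+(x_{i_1}^{a_{i_1}},\ldots,x_{i_{m-1}}^{a_{i_{m-1}}})$, applying Theorem \ref{criteria1} (via Corollary \ref{criteria} with $h_2=x_{i_m}$), using the identity $I':x_{i_m}^{a_{i_m}}=(J:x_{i_m}^{a_{i_m}})+(x_{i_1}^{a_{i_1}},\ldots,x_{i_{m-1}}^{a_{i_{m-1}}})$ to re-enter the induction, and splitting the subsets $\sigma$ according to whether $i_m\in\sigma$, exactly as in the paper's Equations (\ref{betti2})--(\ref{betti5}). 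One small slip in a side remark: if some $u\in G(J)$ had $\deg_{x_{i_m}}(u)\geq a_{i_m}$, it is $u$ (being divisible by $x_{i_m}^{a_{i_m}}$), not $f$, that would fail to be a minimal generator of $I$, contradicting $G(I)=G(J)\cup\{x_{i_1}^{a_{i_1}},\ldots,x_{i_m}^{a_{i_m}}\}$; the conclusion you need, namely $\deg_{x_{i_m}}(v)<a_{i_m}$ for all $v\in G(I')$, is unaffected.
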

\begin{proof}
1) For each $1\leq j\leq m$  and $u\in G(J)$, $\deg_{x_{i_j}}(u)< a_{i_j}$ , so part 1
is an immediate consequence of Theorem \ref{criteria1}.

 2)  Since $(
x_{i_1}^{a_{i_1}},\ldots,x_{i_m}^{a_{i_m}})\subset I$, it is clear
that $$\dim(R/I)\leq \dim(R/(
x_{i_1}^{a_{i_1}},\ldots,x_{i_m}^{a_{i_m}})\leq n-m.$$

3)  We compute the Betti numbers of $R/I$ by induction on $m$. Let
$m=1$. So, $I=J+(x_{i_1}^{a_{i_1}})$ and $G(I)=G(J)\cup \{x_{i_1}^{a_{i_1}}\}$. Therefore
$\deg_{x_{i_1}}(x_{i_1}^{a_{i_1}})>\deg_{x_{i_1}}(u)$ for each $u\in
G(J)$, and by Theorem \ref{criteria1}, the minimal free resolution of
$R/I$ is obtained by the mapping cone corresponding to the following
short exact sequence
$$0\to R/(J:(x_{i_1}^{a_{i_1}}))(-a_{i_1})\to R/J\to R/I\to 0.$$
So
$$\beta_{i,j}(R/I)=\beta_{i,j}(R/J)+\beta_{i-1,j-a_{i_1}}(R/J:(x_{i_1}^{a_{i_1}})).$$
which coincides to the Equation $(\ref{betti})$ for the case $m=1$.
Now assume that $m>1$ and  the result is true for all $k$ smaller than $m$. We prove it for  $m$. So assume that $I=J+(
x_{i_1}^{a_{i_1}},\ldots,x_{i_m}^{a_{i_m}})$ and $G(I)=G(J)\cup \{x_{i_1}^{a_{i_1}},\ldots,x_{i_m}^{a_{i_m}}\}$.

 Let $J'=J+(
x_{i_1}^{a_{i_1}},\ldots,x_{i_{m-1}}^{a_{i_{m-1}}})$. It is clear that $$G(J')=G(J)\cup \{x_{i_1}^{a_{i_1}},\ldots,x_{i_{m-1}}^{a_{i_{m-1}}}\},$$$I=J'+(x_{i_m}^{a_{i_m}})$ and $G(I)=G(J')\cup \{x_{i_m}^{a_{i_m}}\}$. Therefore,
\begin{equation}\label{betti2}\beta_{i,j}(R/I)=\beta_{i,j}(R/J')+\beta_{i-1,j-a_{i_m}}(R/J':(x_{i_m}^{a_{i_m}})).\end{equation}
 Moreover,
$J':(x_{i_m}^{a_{i_m}})=(J:(x_{i_m}^{a_{i_m}}))+(x_{i_1}^{a_{i_1}},\ldots,x_{i_{m-1}}^{a_{i_{m-1}}})$.  It is easy to see that $G(J':(x_{i_m}^{a_{i_m}}))=G(J:(x_{i_m}^{a_{i_m}}))\cup\{x_{i_1}^{a_{i_1}},\ldots,x_{i_{m-1}}^{a_{i_{m-1}}}\}$.
So by induction hypothesis for the case $m-1$,

\begin{align}\label{betti3}\beta_{i,j}(R/J')&=\sum_{r=0}^{m-1}\sum_{|\sigma|=r}\beta_{i-r,
j-\ell_\sigma}(R/(J:\prod_{j\in \sigma}x_j^{a_j}))\nonumber\\
&  {\text {where}}\
\sigma\subseteq \{i_1,\ldots,i_{m-1}\},\ \ell_\sigma=\sum_{t\in
\sigma}a_t,\end{align}

and

\begin{align}\label{betti4}&
\beta_{i-1,j-a_{i_m}}(R/(J':(x_{i_m}^{a_{i_m}})))\nonumber\\
&=\sum_{r=0}^{m-1}\sum_{|\sigma|=r}\beta_{i-1-r,
j-a_{i_m}-\ell_\sigma}(R/((J:(x_{i_m}^{a_{i_m}})):\prod_{j\in
\sigma}x_j^{a_j}))\nonumber\\
& {\text {where}}\ \sigma\subseteq
\{i_1,\ldots,i_{m-1}\},\ \ell_\sigma=\sum_{t\in
\sigma}a_t.\end{align}

For each $\sigma \subseteq \{i_1,\ldots,i_{m-1}\}$, we let
$\sigma'=\sigma\cup \{i_m\}$. It is clear that
$a_{i_m}+\ell_\sigma=\ell_{\sigma'}$, and
$(J:(x_{i_m}^{a_{i_m}})):\prod_{j\in \sigma}x_j^{a_j}=J:\prod_{j\in
\sigma'}x_j^{a_j}$. So the Equation (\ref{betti4}) can be written
as:

\begin{equation}\label{betti5}
\end{equation}
\begin{eqnarray*}
\beta_{i-1,j-a_{i_m}}(R/(J':(x_{i_m}^{a_{i_m}})))&&=\sum_{r=1}^{m}\sum_{|\sigma|=r}\beta_{i-r,
j-\ell_\sigma}(R/(J:\prod_{j\in \sigma}x_j^{a_j}))\\&& {\text
{where}}\ \{i_m\}\subseteq \sigma\subseteq \{i_1,\ldots,i_{m}\},\
\ell_\sigma=\sum_{t\in \sigma}a_t.\end{eqnarray*}

Now  it is enough to replace (\ref{betti3}) and (\ref{betti5}) in
(\ref{betti2}) to get Equation (\ref{betti}).
\end{proof}

 In the following we are going
to apply Theorem \ref{gereralized peeva} to the case that $J$ is a
square-free monomial ideal. We remark that an arbitrary square-free
monomial ideal can be considered as edge ideal of a hypergraph.

Let $X$ be a finite set and $\mathcal{E} = \{E_1,\ldots,E_s\}$ a
finite collection of non empty subsets of X. The pair $\mathcal{H} =
(X, E)$ is called a hypergraph on $X$. The elements of $X$ and
$\mathcal{E}$, respectively, are called the vertices and the edges
of the hypergraph. A hypergraph is called simple if $|E_i|\geq 2$
for all $i =1,\ldots s$ and $ E_j\subset  E_i$ only if $i = j$ . In the following we assume that $\mathcal{H}$ is a simple hypergraph.

Let $\mathcal{H}$ be a  hypergraph on the vertex set $X$. We
recall that $ W\subseteq
 X$ is  an independent set if $W$ does not contain any edge of
$\mathcal{H}$. The size of an independent set is the number of
vertices it contains.

A maximal independent set is either an independent set such that
adding any other vertex to the set forces the set to contain an edge
or the set of all vertices of the empty hypergraph. In the following we denote by $\max(\mathcal{H})$ the set of all maximal independent subsets of $\mathcal{H}$.

A maximum independent set is an independent set of largest possible
size for a given hypergraph $\mathcal{H}$. This size is called the
independence number of $\mathcal{H}$, and denoted
$\alpha(\mathcal{H})$.

For a hypergraph $\mathcal{H}$ on the vertex set $X$ , the
independence complex of $\mathcal{H}$  is defined  as:
$$\Delta(\mathcal{H}) = \{W\subset X\ | \ W \ \ {\text {is an
independent set}}\}.$$

For a hypergraph $\mathcal{H}$ with vertex set $[n]$ the edge ideal
of $\mathcal{H}$ in the polynomial ring $R$ is defined as: $$J_{
\mathcal{H}} = (\prod_{x\in E}x;\ E \ {\text {is an edge of}} \
\mathcal{H}).$$

Note that the edge ideal of a hypergraph is defined in the same way
as the edge ideal of a graph. We also remark that we can
consider$J_{ \mathcal{H}}$ as Stanley-Reisner ideal of
$\Delta(\mathcal{H})$.

\begin{remark}\label{remark graph}\rm
Let $\mathcal{H}$ be a hypergraph on the vertex set $[n]$.  Assume
that $J=J_\mathcal{H}$. For each $\sigma\subseteq [n]$ let
$N(\sigma)=\{i\in [n]\setminus \sigma\ ; \sigma\cup \{i\}\ {\text{is
not independent}}\}$,
 and $\mathcal{H}_\sigma$ be the simple
hypergraph on the vertex set $[n]\setminus(\sigma\cup N(\sigma))$
with $\mathcal{E}(\mathcal{H}_\sigma)=\{E\setminus \sigma\ ; \ E\in
E(\mathcal{H}), E\setminus \sigma \subseteq
V(\mathcal{H}_\sigma)\}$.

Assume that for each $j\in \sigma$, $a_j>0$. If $\sigma$ is not an independent set, then it is clear that
$J:\prod_{j\in \sigma}x_j^{a_j}=R$. If $\sigma$ is an independent
set, then $J:\prod_{j\in \sigma}x_j^{a_j}=(x_i\ ; \ i\in
N(\sigma))+J_{\mathcal{H}_\sigma}$. In particular, if $\sigma$ is a
maximal independent set, then $J:\prod_{j\in \sigma}x_j^{a_j}=(x_i\
; \ i\in N(\sigma))=(x_i\ ; \ i\in [n]\setminus \sigma)$.
\end{remark}

If $I=J_{\mathcal{H}}+(x_{i_1}^{a_{i_1}},\ldots, x_{i_m}^{a_{i_m}})$, then by  Theorem \ref{gereralized peeva} and Remark \ref{remark graph}, we
can write the graded Betti numbers of $R/I$ in terms of the graded Betti numbers of $R/J_{\mathcal{H}}$ and  $R/J_{\mathcal{H'}}$ for some hypergraphs associated to $\mathcal{H}$. In the following we discuss the case that $\forall j, a_{i_j}=2$.

\begin{theorem}\label{hypergraph main theorem}
Let  $\mathcal{H}$ be a hypergraph on the vertex set $[n]$. Assume
that $I=J_\mathcal{H}+(x_{i_1}^{2},\ldots ,x_{i_m}^{2})$. Then
\begin{align}
\beta_{i,j}&(R/I)=\sum_{r=0}^m\sum_{|\sigma|=r}\beta_{i-r,
j-2r}(R/(J_\mathcal{H}:\prod_{j\in \sigma}x_j^{2}))\nonumber\\
& {\text
{where}}\ \sigma\subseteq \{i_1,\ldots,i_m\}, \ \sigma\in\Delta(\mathcal{H})\ {\text{and}}\nonumber\\&R/J_\mathcal{H}:\prod_{j\in \sigma}x_j^{2}={\bf{k}}[x_i\ ; \ i\in
\sigma\cup N(\sigma)]/(x_i\ ; \ i\in N(\sigma))\otimes
{\bf{k}}[V({\mathcal{H}_\sigma})]/J_{\mathcal{H}_\sigma}.\label{betti6}\end{align}
\end{theorem}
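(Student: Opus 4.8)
The plan is to combine Theorem~\ref{gereralized peeva}(3) with Remark~\ref{remark graph}, specialized to the case $a_{i_j}=2$ for all $j$. First I would invoke Theorem~\ref{gereralized peeva} applied to $J=J_\mathcal{H}$ and $I=J_\mathcal{H}+(x_{i_1}^2,\ldots,x_{i_m}^2)$: this immediately gives
$$\beta_{i,j}(R/I)=\sum_{r=0}^m\sum_{|\sigma|=r}\beta_{i-r,j-\ell_\sigma}\bigl(R/(J_\mathcal{H}:\textstyle\prod_{j\in\sigma}x_j^{2})\bigr),$$
where $\sigma$ ranges over subsets of $\{i_1,\ldots,i_m\}$ and $\ell_\sigma=\sum_{t\in\sigma}2=2|\sigma|=2r$. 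Substituting $\ell_\sigma=2r$ already matches the exponent shift $j-2r$ appearing in the claimed formula, so the only remaining work is to identify the colon ideals $J_\mathcal{H}:\prod_{j\in\sigma}x_j^{2}$ and to see which $\sigma$ contribute nontrivially.

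Next I would apply Remark~\ref{remark graph} verbatim (with all $a_j=2>0$): if $\sigma$ is not an independent set of $\mathcal{H}$ — equivalently $\sigma\notin\Delta(\mathcal{H})$ — then $J_\mathcal{H}:\prod_{j\in\sigma}x_j^{2}=R$, hence $R/(J_\mathcal{H}:\prod_{j\in\sigma}x_j^{2})=0$ and that term of the sum vanishes. This justifies restricting the outer summation to those $\sigma$ with $\sigma\in\Delta(\mathcal{H})$. For the surviving $\sigma$ (independent sets), Remark~\ref{remark graph} gives $J_\mathcal{H}:\prod_{j\in\sigma}x_j^{2}=(x_i\ ;\ i\in N(\sigma))+J_{\mathcal{H}_\sigma}$, where $N(\sigma)$ and $\mathcal{H}_\sigma$ are as defined there; since the variables $\{x_i : i\in N(\sigma)\}$ and the polynomial ring ${\bf{k}}[V(\mathcal{H}_\sigma)]$ involve disjoint sets of variables (recall $V(\mathcal{H}_\sigma)=[n]\setminus(\sigma\cup N(\sigma))$), this ideal splits as a ``sum in disjoint variable sets,'' so
$$R/(J_\mathcal{H}:\textstyle\prod_{j\in\sigma}x_j^{2})\ \cong\ {\bf{k}}[x_i\ ;\ i\in\sigma\cup N(\sigma)]/(x_i\ ;\ i\in N(\sigma))\ \otimes_{\bf{k}}\ {\bf{k}}[V(\mathcal{H}_\sigma)]/J_{\mathcal{H}_\sigma},$$
which is exactly the tensor decomposition in the statement. (One should note the first tensor factor is just a polynomial ring in the variables indexed by $\sigma$, but writing it this way keeps track of the ambient grading.)

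Assembling these two observations yields formula~(\ref{betti6}) directly: start from the Betti sum of Theorem~\ref{gereralized peeva}, drop the zero terms coming from dependent $\sigma$, rewrite $\ell_\sigma=2r$, and replace each surviving cyclic module by its tensor description. The main (and really the only) technical point to verify carefully is the disjointness of variable sets in the splitting $J_\mathcal{H}:\prod_{j\in\sigma}x_j^{2}=(x_i\ ;\ i\in N(\sigma))+J_{\mathcal{H}_\sigma}$ — i.e.\ that $N(\sigma)$, $V(\mathcal{H}_\sigma)$, and $\sigma$ are pairwise disjoint and that no generator of $J_{\mathcal{H}_\sigma}$ involves a variable from $N(\sigma)$ — since this is what licenses both the tensor factorization and, implicitly, the interpretation of each $\beta_{i-r,j-2r}$ on the right-hand side via the Künneth-type product of Betti numbers. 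All of this is either immediate from the definitions in Remark~\ref{remark graph} or a routine consequence of it, so no further argument beyond citing Theorem~\ref{gereralized peeva} and Remark~\ref{remark graph} is needed.
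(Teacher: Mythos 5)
Your proposal is correct and takes essentially the same route as the paper's own proof: apply Theorem \ref{gereralized peeva} with all $a_{i_j}=2$ (so $\ell_\sigma=2r$), then use Remark \ref{remark graph} to drop the non-independent $\sigma$ (colon ideal equal to $R$) and to rewrite each surviving colon ideal, the tensor splitting following from the disjointness of $\sigma\cup N(\sigma)$ and $V(\mathcal{H}_\sigma)$. The only point the paper states that you leave implicit is the verification that $G(I)=G(J_\mathcal{H})\cup\{x_{i_1}^{2},\ldots,x_{i_m}^{2}\}$, which is needed to invoke Theorem \ref{gereralized peeva} and holds because $\mathcal{H}$ is simple, so $J_\mathcal{H}$ is squarefree with generators of degree at least $2$.
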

\begin{proof}
First note that $G(I)=G(J)\cup \{x_{i_1}^2,\ldots,x_{i_m}^2\}$ So we can apply Theorem
\ref{gereralized peeva}.

 By Remark \ref{remark graph}, in order to compute the
Betti numbers of $R/I$, it is enough to consider all
$\sigma\subseteq \{i_1,\ldots,i_m\}$ where $\sigma\in\Delta(\mathcal{H})$. Also, if $\sigma$ is  an
independent set,
$$R/J_\mathcal{H}:\prod_{j\in \sigma}x_j^{2}={\bf{k}}[x_i\ ; \ i\in
\sigma\cup N(\sigma)]/(x_i\ ; \ i\in N(\sigma))\otimes
{\bf{k}}[V({\mathcal{H}_\sigma})]/J_{\mathcal{H}_\sigma}.$$
\end{proof}

\begin{corollary}\label{graph main theorem}
Let  $\mathcal{H}$ be a hypergraph on the vertex set $[n]$. Assume
that $I=J_\mathcal{H}+(x_{i_1}^{2},\ldots ,x_{i_m}^{2})$. Then

 $$\beta_{n,j}(R/I)=|\{\sigma\ ; \ \sigma\in \max(\mathcal{H}), |\sigma|=j-n,\ {\text{and}}\ \sigma\subseteq \{i_1,\ldots ,i_m\}\}|.$$

    Therefore,
    \begin{itemize}
    \item $\beta_{n}(R/I)=|\{\sigma\ ; \ \sigma\in \max(\mathcal{H})\ {\text{and}}\   \sigma\subseteq \{i_1,\ldots
    ,i_m\}\}|.$
    \item $\depth(R/I)=0$ if and only if $\{i_1,\ldots,i_m\}$ is
    containing a maximal independent set.
    \end{itemize}

\end{corollary}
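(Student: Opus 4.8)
The plan is to extract the statement directly from the Betti number formula (\ref{betti6}) of Theorem \ref{hypergraph main theorem} by isolating the contributions to homological degree $i=n$. First I would observe that in the formula
$$\beta_{n,j}(R/I)=\sum_{r=0}^m\sum_{|\sigma|=r,\ \sigma\in\Delta(\mathcal{H}),\ \sigma\subseteq\{i_1,\ldots,i_m\}}\beta_{n-r,\,j-2r}\bigl(R/(J_\mathcal{H}:\textstyle\prod_{j\in\sigma}x_j^2)\bigr),$$
the module $R/(J_\mathcal{H}:\prod_{j\in\sigma}x_j^2)$ is, by Remark \ref{remark graph}, a tensor product $\mathbf{k}[x_i;i\in\sigma\cup N(\sigma)]/(x_i;i\in N(\sigma))\otimes \mathbf{k}[V(\mathcal{H}_\sigma)]/J_{\mathcal{H}_\sigma}$ living in the polynomial ring $R$ on $n$ variables. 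Its projective dimension is $|N(\sigma)|+\pd\bigl(\mathbf{k}[V(\mathcal{H}_\sigma)]/J_{\mathcal{H}_\sigma}\bigr)$, and since $J_{\mathcal{H}_\sigma}$ is a squarefree monomial ideal (contained in the graded maximal ideal of the polynomial ring on $V(\mathcal{H}_\sigma)$), Remark \ref{sfree pd} gives $\pd\bigl(\mathbf{k}[V(\mathcal{H}_\sigma)]/J_{\mathcal{H}_\sigma}\bigr)\le |V(\mathcal{H}_\sigma)|-1$, with equality forced only in degenerate situations. Consequently $\beta_{n-r,\,*}(R/(J_\mathcal{H}:\prod_{j\in\sigma}x_j^2))=0$ unless $n-r\le |N(\sigma)|+|V(\mathcal{H}_\sigma)|-1 < |N(\sigma)|+|V(\mathcal{H}_\sigma)| \le n-|\sigma|=n-r$ whenever $V(\mathcal{H}_\sigma)\ne\varnothing$; so the only $\sigma$ that can contribute in homological degree $n$ are those with $V(\mathcal{H}_\sigma)=\varnothing$ and $N(\sigma)=[n]\setminus\sigma$, i.e.\ precisely the maximal independent sets of $\mathcal{H}$ that are contained in $\{i_1,\ldots,i_m\}$.

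Next I would pin down the contribution of each such maximal independent set $\sigma$ with $r=|\sigma|$. For these, Remark \ref{remark graph} gives $R/(J_\mathcal{H}:\prod_{j\in\sigma}x_j^2)=R/(x_i;i\in[n]\setminus\sigma)$, a complete intersection on $n-r$ variables, whose minimal free resolution is the Koszul complex; thus $\beta_{n-r}(R/(x_i;i\in[n]\setminus\sigma))=\binom{n-r}{n-r}=1$, concentrated in internal degree $n-r$, so $\beta_{n-r,\,(n-r)}=1$ and is $0$ in all other internal degrees. Plugging $i=n$, $r=|\sigma|$ into (\ref{betti6}) then contributes $\beta_{n-r,\,j-2r}=1$ exactly when $j-2r=n-r$, i.e.\ $j=n+r=n+|\sigma|$. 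Summing over all such $\sigma$ yields
$$\beta_{n,j}(R/I)=\bigl|\{\sigma\in\max(\mathcal{H}):\ |\sigma|=j-n\ \text{and}\ \sigma\subseteq\{i_1,\ldots,i_m\}\}\bigr|,$$
which is the first displayed formula. The two bulleted consequences are then immediate: $\beta_n(R/I)=\sum_j\beta_{n,j}(R/I)$ counts all maximal independent sets contained in $\{i_1,\ldots,i_m\}$; and $\depth(R/I)=n-\pd(R/I)=0$ iff $\beta_n(R/I)\ne0$ iff $\{i_1,\ldots,i_m\}$ contains some maximal independent set of $\mathcal{H}$.

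The main obstacle I anticipate is the vanishing argument for $n-r>$ the projective dimension of the tensor-product module: one must argue carefully that when $\sigma$ is independent but \emph{not} maximal, either $N(\sigma)\subsetneq[n]\setminus\sigma$ (so $V(\mathcal{H}_\sigma)\ne\varnothing$) and the squarefree ideal $J_{\mathcal{H}_\sigma}$ genuinely drops the projective dimension below the number of remaining variables, or $J_{\mathcal{H}_\sigma}=0$ in which case $\mathbf{k}[V(\mathcal{H}_\sigma)]/J_{\mathcal{H}_\sigma}=\mathbf{k}[V(\mathcal{H}_\sigma)]$ is free of projective dimension $0$; in both cases $\pd(R/(J_\mathcal{H}:\prod x_j^2))=|N(\sigma)|+\pd(\mathbf{k}[V(\mathcal{H}_\sigma)]/J_{\mathcal{H}_\sigma})\le |N(\sigma)|+|V(\mathcal{H}_\sigma)|-1< n-|\sigma|$ (using that $\sigma$, $N(\sigma)$, $V(\mathcal{H}_\sigma)$ partition $[n]$ and $V(\mathcal{H}_\sigma)$ is nonempty). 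The edge case is exactly when $V(\mathcal{H}_\sigma)=\varnothing$, which by definition of $N(\sigma)$ means $\sigma\cup N(\sigma)=[n]$ with every vertex outside $\sigma$ completing $\sigma$ to a dependent set — i.e.\ $\sigma$ is maximal independent — and this is the only surviving term; handling this dichotomy cleanly via Remark \ref{sfree pd} is the crux.
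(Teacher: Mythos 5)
Your proposal is correct and follows essentially the same route as the paper: specialize formula (\ref{betti6}) to homological degree $n$, use the tensor decomposition from Remark \ref{remark graph} together with the bound $\pd(\mathbf{k}[V(\mathcal{H}_\sigma)]/J_{\mathcal{H}_\sigma})\le |V(\mathcal{H}_\sigma)|-1$ from Remark \ref{sfree pd} to kill all non-maximal independent $\sigma$, and then read off the Koszul contribution $\beta_{n-|\sigma|,\,n-|\sigma|}=1$ for each maximal independent $\sigma\subseteq\{i_1,\ldots,i_m\}$, forcing $j=n+|\sigma|$. The two bulleted consequences are obtained exactly as in the paper, so no changes are needed.
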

\begin{proof}

$\beta_{n,j}(R/I)$ can be computed by Equation (\ref{betti6}). If
$\sigma\subseteq\{i_1,\ldots,i_m\}$ is an independent, we have
 \begin{align*}&\pd(R/J_\mathcal{H}:\prod_{j\in \sigma}x_j^{2})=\nonumber\\
&\pd({\bf{k}}[x_i\ ; \ i\in
\sigma\cup N(\sigma)]/(x_i\ ; \ i\in N(\sigma))+\pd(
{\bf{k}}[V({\mathcal{H}_\sigma})]/J_{\mathcal{H}_\sigma}),\end{align*} (where
in the above formula $
{\bf{k}}[V({\mathcal{H}_\sigma})]/J_{\mathcal{H}_\sigma}$ appears
when $V(\mathcal{H}_\sigma)\neq \emptyset$ and in this case, by
Remark \ref{sfree pd}, $\pd(
{\bf{k}}[V({\mathcal{H}_\sigma})]/J_{\mathcal{H}_\sigma}\leq
|V(\mathcal{H}_\sigma)|-1$). Therefore, if $\sigma$ is a maximal
independent set then
$$\pd(R/J_\mathcal{H}:\prod_{j\in \sigma}x_j^{2})=|N(\sigma)|=n-|\sigma|,$$
and if $\sigma$ is not a maximal independent set
\begin{eqnarray*}
\pd(R/J_\mathcal{H}:\prod_{j\in \sigma}x_j^{2})&&=|N(\sigma)|+\pd(
{\bf{k}}[V({\mathcal{H}_\sigma})]/J_{\mathcal{H}_\sigma})\leq|N(\sigma)|+|V(\mathcal{H}_\sigma)|-1\\&&
\leq
|N(\sigma)|+n-(|\sigma|+|N(\sigma)|)-1=n-|\sigma|-1.\end{eqnarray*}
So

\begin{align*}
\beta_{n,j}(R/I)&=\sum_{r=0}^m\sum_{|\sigma|=r}\beta_{n-r,
j-2r}(R/(J_\mathcal{H}:\prod_{j\in \sigma}x_j^{2}))\nonumber\\
& ({\text
{where}}\ \sigma\subseteq \{i_1,\ldots,i_m\}, \sigma\in \max(\mathcal{H}))\nonumber\\
&=\sum_{\sigma\subseteq \{i_1,\ldots,i_m\}, \sigma\in \max(\mathcal{H})}
\beta_{n-|\sigma|, j-2|\sigma|}(R/(x_i\ ; \ x_i\in N(\sigma)) )
\nonumber\\
&=|\{\sigma\ ;
\sigma\subseteq \{i_1,\ldots,i_m\}, \sigma\in \max(\mathcal{H}) {\text{ and }}\
2|\sigma|+|N(\sigma)|=j\}|\nonumber\\
&=|\{\sigma\ ; \ \sigma\subseteq \{i_1,\ldots,i_m\}, \sigma\in \max(\mathcal{H}), |\sigma|=j-n\}|.\end{align*}
 Therefore
$$\beta_n(R/I)=\sum_{j}\beta_{n,j}(R/I)=|\{\sigma\ ; \ \sigma\in\max(\mathcal{H}),   \sigma\subseteq \{i_1,\ldots
    ,i_m\}\}|.$$
\end{proof}

    \begin{corollary}\label{final result}
Let  $\mathcal{H}$ be a hypergraph on the vertex set $[n]$. Assume
that $I=J_\mathcal{H}+(x_1^{2},\ldots, x_n^{2})$. Then
\begin{enumerate}
\item
$$\beta_{n,j}(R/I)=|\{\sigma\ ; \ \sigma\in\max(\mathcal{H}), |\sigma|=j-n\}|$$
\item \begin{align*}\beta_n(R/I)&={\text{the number of  maximal independent sets for}}\ \mathcal{H} \nonumber\\
& ={\text{the number of facets of }}\ \Delta(\mathcal{H}).\end{align*}
\item $R/I$ is a level ring if and only if $\Delta(\mathcal{H})$ is
a pure simplicial complex if and only if $J_\mathcal{H}$ is an
unmixed ideal.
\item $$\reg(R/I)=\alpha(\mathcal{H}).$$
\end{enumerate}
\end{corollary}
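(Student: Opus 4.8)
The plan is to derive all four parts of Corollary \ref{final result} from the special case $\{i_1,\ldots,i_m\}=[n]$ of Corollary \ref{graph main theorem} together with the structural results already available. First I would specialize Corollary \ref{graph main theorem}: taking $m=n$ and $\{i_1,\ldots,i_m\}=[n]$, every subset $\sigma$ automatically satisfies $\sigma\subseteq\{i_1,\ldots,i_m\}$, so the formula collapses to $\beta_{n,j}(R/I)=|\{\sigma\ ;\ \sigma\in\max(\mathcal{H}),\ |\sigma|=j-n\}|$, which is part (1). For part (2), I would sum over $j$ and invoke the fact, recalled in the preliminaries, that $J_\mathcal{H}$ is the Stanley--Reisner ideal of the independence complex $\Delta(\mathcal{H})$, so the maximal independent sets of $\mathcal{H}$ are exactly the facets of $\Delta(\mathcal{H})$; this identifies $\beta_n(R/I)$ with both quantities claimed.

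For part (4), the key observation is that $\beta_{n,j}(R/I)\neq 0$ exactly when $j-n$ is the size of some maximal independent set, so $\max\{j-n\ ;\ \beta_{n,j}(R/I)\neq 0\}=\alpha(\mathcal{H})$ by definition of the independence number. Thus $\reg(R/I)\geq\alpha(\mathcal{H})$. For the reverse inequality I would argue that for every subset $\sigma\subseteq[n]$ appearing in Equation (\ref{betti6}), the relevant tensor-product module $R/(J_\mathcal{H}:\prod_{j\in\sigma}x_j^2)$ contributes to $\beta_{i,j}(R/I)$ only with shift $j-2r$ in the homological variable $i-r$, so each contributed $(j-i)$-value equals $\bigl((j-2r)-(i-r)\bigr)$ for the underlying module, namely $\reg$ of a monomial complete-intersection-type piece $(x_i;i\in N(\sigma))$ tensored with $R/J_{\mathcal{H}_\sigma}$; the regularity of the linear piece is $0$ and that of $R/J_{\mathcal{H}_\sigma}$ is at most $\alpha(\mathcal{H}_\sigma)\leq\alpha(\mathcal{H})-|\sigma|$ plus the shift $|\sigma|$ already absorbed — so every contribution to $\reg(R/I)$ is bounded by $\alpha(\mathcal{H})$. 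Here I can safely use Corollary \ref{final result}(1) for the top homological degree and bound the lower degrees by an induction on $n$ (the hypergraphs $\mathcal{H}_\sigma$ live on fewer vertices), which is the cleanest route; alternatively one notes $\pd(R/I)=n$ by Corollary \ref{graph main theorem} together with $\beta_n\neq 0$, and combines $\reg$ and $\pd$ with the top Betti data.

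For part (3), I would unwind the definitions: $R/I$ is a level ring iff the last free module in its minimal free resolution is generated in a single degree, i.e. iff $\beta_{n,j}(R/I)\neq 0$ for exactly one value of $j$. By part (1) this happens iff all maximal independent sets of $\mathcal{H}$ have the same size, i.e. iff $\Delta(\mathcal{H})$ is pure, which in turn is equivalent to $J_\mathcal{H}$ being unmixed since $J_\mathcal{H}$ is the Stanley--Reisner ideal of $\Delta(\mathcal{H})$ and the minimal primes of $J_\mathcal{H}$ correspond to the facets of $\Delta(\mathcal{H})$. I would also note that $\pd(R/I)=n$ (so that $R/I$ is Cohen--Macaulay of dimension $0$ in the quotient-by-$\m$ sense is not quite what is needed, but level-ness is about the last syzygy module) needs to be recorded, and this follows because $\Delta(\mathcal{H})$ has at least one facet, giving $\beta_n(R/I)\neq 0$.

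The main obstacle I anticipate is part (4)'s upper bound $\reg(R/I)\leq\alpha(\mathcal{H})$: the top Betti numbers handle the lower bound immediately, but controlling $\beta_{i,j}(R/I)$ for $i<n$ requires care, since Equation (\ref{betti6}) expresses them through the modules $R/J_{\mathcal{H}_\sigma}$ whose regularities are not controlled by elementary means in general. The clean fix is the induction on the number of vertices, using that each $\mathcal{H}_\sigma$ with $\sigma\neq\emptyset$ has strictly fewer vertices and that $I_\sigma:=J_{\mathcal{H}_\sigma}+(x_k^2;\ k\in V(\mathcal{H}_\sigma))$ falls under the same corollary — but one must bookkeep the degree shifts $2|\sigma|$ and the homological shifts $|\sigma|$ correctly so that the inductive bound $\reg(R/I_\sigma)\leq\alpha(\mathcal{H}_\sigma)$ together with $\alpha(\mathcal{H}_\sigma)+|\sigma|\leq\alpha(\mathcal{H})$ closes the argument.
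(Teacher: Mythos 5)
Parts (1)--(3) of your plan are correct and essentially the paper's proof: (1) and (2) are the specialization of Corollary \ref{graph main theorem} to $\{i_1,\ldots,i_m\}=[n]$ together with the identification of maximal independent sets with facets of $\Delta(\mathcal{H})$, and (3) follows from (1) via the dictionary between facets of $\Delta(\mathcal{H})$, maximal independent sets, and minimal primes (equivalently, minimal vertex covers) of $J_\mathcal{H}$.

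The gap is in the upper bound $\reg(R/I)\leq\alpha(\mathcal{H})$ of part (4). Your preferred route, induction on the number of vertices through Equation (\ref{betti6}), cannot close: that formula also contains the term $\sigma=\emptyset$, whose contribution to $\beta_{i,j}(R/I)$ is $\beta_{i,j}(R/J_\mathcal{H})$ with no shift, so you need $\reg(R/J_\mathcal{H})\leq\alpha(\mathcal{H})$ for $\mathcal{H}$ itself --- which is not a smaller instance (your induction only reaches the hypergraphs $\mathcal{H}_\sigma$ with $\sigma\neq\emptyset$), and which you assert without justification. The assertion is in fact true (it is the statement $\reg(R/I_\Delta)\leq\dim\Delta+1$ for a Stanley--Reisner ring, provable by Hochster's formula, noting $J_\mathcal{H}=I_{\Delta(\mathcal{H})}$ and $\dim\Delta(\mathcal{H})+1=\alpha(\mathcal{H})$), but as written your argument is circular at precisely this term, so an external input is required. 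The paper sidesteps all of this with one observation you set aside: since $(x_1^2,\ldots,x_n^2)\subseteq I$, we have $\dim(R/I)=0$, so $R/I$ is Artinian and its regularity is read off the last step of the resolution, $\reg(R/I)=\max\{j-n\ ;\ \beta_{n,j}(R/I)\neq 0\}$, which equals $\alpha(\mathcal{H})$ by part (1). Your parenthetical alternative (``combine $\reg$ and $\pd$ with the top Betti data'') gestures at this, but the decisive fact --- that for this dimension-zero quotient the maximum of $j-i$ over nonzero $\beta_{i,j}$ is attained at $i=n$ --- is never stated or used. Either invoke that fact, or justify $\reg(R/J_{\mathcal{H}'})\leq\alpha(\mathcal{H}')$ via Hochster's formula for all the hypergraphs occurring (including $\mathcal{H}$ itself); as it stands, part (4) is not proved.
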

\begin{proof}
(1) and (2)  are immediate consequences  of Corollary \ref{graph
main theorem}.

To see (3) note that $R/I$ is a level ring if and only if the last
nonzero graded  free module  of its graded minimal free resolution,  is of the form $R^a(-s)$, for some
positive integers $a$ and $s$. So by part (1), $R/I$ is a level ring
if and only if all maximal independent sets of $\mathcal{H}$ are of
the same size. Also note that $J_{\mathcal{H}}$ is unmixed if all
 minimal vertex covers of $\mathcal{H}$ have the
same cardinality. So the conclusion follows from the fact that
$C\subset [n]$ is a minimal vertex cover if and only if
$[n]\setminus C$ is a maximal independent set.

To prove (4) it is enough to notice that $\dim(R/I)=0$ and therefore
$$\reg(R/I)=\max\{j\ ; \ \beta_{n,n+j}(R/I)\neq
0\}=\alpha(\mathcal{H}).$$
\end{proof}

\begin{remark}\label{second remark for graphs}\rm
 Let $I$ be a monomial ideal generated in degree $2$ and
$I_*$ be the square-free part of $I$. It is clear that there exists
a graph $G$ on the vertex set $[n]$ in such a way that $I_*=J_G$. So
$I=J_G+(x_{i_1}^2,\ldots,x_{i_m}^2)$ for some
$\{i_1,\ldots,i_m\}\subseteq [n]$. Theorem \ref{hypergraph main
theorem} shows that we can compute the graded Betti numbers of $R/I$
in terms of the graded Betti numbers of $R/J_G$ and the graded Betti
numbers of $R/J_H$ (For some induced subgraphs $H$ of $G$).

It is also possible to study the Betti numbers of $R/I$ by the idea
of {\it polarization} (see \cite[Corollary 1.6.3]{HH2}). Note that
if $I$ and $G$ be as above, and
$$J=I_*+(x_{i_1}y_1,\ldots,x_{i_m}y_m)\subset
{\bf{k}}[x_1,\ldots,x_n,y_1,\ldots,y_m]$$ be its polarization, then
we can view $J$ as the edge ideal of the graph $H$ that is defined
as $V(H)=[n]\cup \{-1,\ldots ,-m\}$ and $E(H)=E(G)\cup
\{\{i_1,-1\},\ldots ,\{i_m,-m\}\}$. It means that $J=J_H$. Here, $G$
is an induced subgraph of $H$. The idea of attaching the graph $H$
to the ideal $I$ in order to study the Betti numbers has been used
in \cite{HHZh} where the authors studied the class of monomial
ideals with $2-$linear resolution (see \cite[Section 2]{HHZh}).

Note that by \cite[Corollary 1.6.3]{HH2},
$$\forall i,j, \beta_{ij}(R/I)=\beta_{ij}({\bf{k}}[x_1,\ldots,x_n,y_1,\ldots,y_m]/J).
$$

By \cite[Theorem 20 and Lemma 21]{W}
$\reg({\bf{k}}[x_1,\ldots,x_n,y_1,\ldots,y_m]/J)=\alpha(G)$. Since
$\reg(R/I)=\reg({\bf{k}}[x_1,\ldots,x_n,y_1,\ldots,y_m]/J)$, part
(4) of Corollary \ref{final result} is a generalization of the
mentioned result of \cite{W} to the case that
$I=J_{\mathcal{H}}+(x_1^2,\ldots,x_n^2)$ and $\mathcal{H}$ is a
hypergraph on the vertex set $[n]$.
\end{remark}

Finally, we are going to apply Theorem \ref{hypergraph main theorem} and Corollary \ref{final result} to the case that $G$ is a complete $r-$partite graph.

\begin{theorem}\label{complete graph}
Let $G=K_{n_1,\ldots,n_t}$ be a complete $t-$partite graph on the vertex set $[n]$ and let $I=J_G+(x_1^2,\ldots ,x_n^2)$. Then
$$\beta_{i,j}(R/I)=\beta_{i,j}(R/J_G)+\sum_{\ell=1}^{t}{{n_{\ell}}\choose{j-i}}{{n-n_\ell}\choose {2i-j}}$$
where \begin{align*}&\beta_{i,j}(R/J_G)= \nonumber\\
&
\left\{
  \begin{array}{ll}
   \sum_{\ell=2}^{t}(\ell-1)\sum_{\alpha_1+\cdots+\alpha_\ell=i+1, \ j_1<\cdots <j_\ell,\alpha_1,\ldots, \alpha_\ell\geq 1}{n_{j_1}\choose \alpha_1}\cdots{n_{j_\ell}\choose \alpha_\ell}, & \hbox{if $j=i+1$} \\
   0, & \hbox{if $j\neq i+1$}.
  \end{array}
\right.
\end{align*}
\end{theorem}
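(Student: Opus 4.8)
The plan is to combine Corollary \ref{final result} (specialized to $\mathcal{H}=G=K_{n_1,\ldots,n_t}$) with the general Betti-number formula of Theorem \ref{hypergraph main theorem}, and to feed in the known resolution of the edge ideal of a complete multipartite graph. First I would record the combinatorial data of $G=K_{n_1,\ldots,n_t}$: a subset $\sigma\subseteq[n]$ is independent exactly when $\sigma$ lies inside a single part $V_\ell$, so the independent complex $\Delta(G)$ is a disjoint union of $t$ simplices on the vertex sets $V_1,\ldots,V_t$; in particular $\alpha(G)=\max_\ell n_\ell$ and the maximal independent sets are precisely the parts $V_1,\ldots,V_t$. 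For the sum in \eqref{betti6} with $I=J_G+(x_1^2,\ldots,x_n^2)$, the only $\sigma\subseteq[n]$ that contribute are the independent ones, i.e. $\sigma\subseteq V_\ell$ for some unique $\ell$. For such a $\sigma$ with $|\sigma|=r$, $N(\sigma)=[n]\setminus V_\ell$ (every vertex outside $V_\ell$ is joined to every vertex of $V_\ell$, since $\sigma\ne\emptyset$), and $\mathcal{H}_\sigma$ is the hypergraph on the remaining vertex set $V_\ell\setminus\sigma$ with no edges; hence $J_G:\prod_{j\in\sigma}x_j^2=(x_i\;;\;i\in[n]\setminus V_\ell)$, an ideal generated by $n-n_\ell$ variables inside a polynomial ring, whose Betti numbers are $\beta_{p,q}=\binom{n-n_\ell}{p}$ for $q=p$.

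Next I would substitute this into \eqref{betti6}. The $r=0$ term gives exactly $\beta_{i,j}(R/J_G)$. For $r\geq 1$, summing over all $\sigma\subseteq V_\ell$ of size $r$ and over $\ell$,
\begin{align*}
\beta_{i,j}(R/I)&=\beta_{i,j}(R/J_G)+\sum_{r\ge1}\sum_{\ell=1}^{t}\binom{n_\ell}{r}\,\beta_{i-r,\,j-2r}\bigl(R/(x_i\;;\;i\in[n]\setminus V_\ell)\bigr)\\
&=\beta_{i,j}(R/J_G)+\sum_{r\ge1}\sum_{\ell=1}^{t}\binom{n_\ell}{r}\binom{n-n_\ell}{i-r},
\end{align*}
where the inner Betti number is nonzero only when $j-2r=i-r$, i.e. $r=j-i$, so the $r$-sum collapses and (absorbing $r=0$ into $\beta(R/J_G)$ which already accounts for it, or noting $r=j-i=0$ forces $j=i$, where $\beta_{i,i}(R/J_G)=0$ for $i>0$) we obtain
$$\beta_{i,j}(R/I)=\beta_{i,j}(R/J_G)+\sum_{\ell=1}^{t}\binom{n_\ell}{j-i}\binom{n-n_\ell}{2i-j},$$
using $i-r=i-(j-i)=2i-j$. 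This is the claimed formula, modulo justifying that there is no overlap between the $r=0$ contribution and the $r\ge 1$ terms: the $r\ge1$ terms only hit bidegrees with $j>i$ (since $r\ge1$), while $\beta_{i,j}(R/J_G)$ with $j=i$ occurs only for $i=0$, which is harmless.

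Finally I would establish the stated closed form for $\beta_{i,j}(R/J_G)$ itself. Since $\Delta(G)$ is a disjoint union of the $t$ simplices on $V_1,\ldots,V_t$, it is connected in codimension... more directly, $R/J_G$ has a $2$-linear resolution: edge ideals of complete multipartite graphs are exactly the squarefree monomial ideals generated in degree $2$ with linear resolution (the complement graph is chordal, or one invokes Fröberg's theorem / \cite{HHZh}), so $\beta_{i,j}(R/J_G)=0$ unless $j=i+1$. For $j=i+1$ one can compute via Hochster's formula: $\beta_{i,i+1}(R/J_G)=\sum_{W}\dim_{\bf k}\widetilde H_{0}(\Delta(G)|_W;{\bf k})$ over $W\subseteq[n]$ with $|W|=i+1$, and $\widetilde H_0$ of the restriction counts (number of parts met by $W$) $-1$; organizing $W$ by which parts it meets and with what multiplicities $\alpha_1,\ldots,\alpha_\ell\ge1$ in parts indexed $j_1<\cdots<j_\ell$, the reduced $0$-th homology has rank $\ell-1$, giving
$$\beta_{i,i+1}(R/J_G)=\sum_{\ell=2}^{t}(\ell-1)\!\!\sum_{\substack{\alpha_1+\cdots+\alpha_\ell=i+1\\ j_1<\cdots<j_\ell\\ \alpha_1,\ldots,\alpha_\ell\ge1}}\!\!\binom{n_{j_1}}{\alpha_1}\cdots\binom{n_{j_\ell}}{\alpha_\ell},$$
as asserted.

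\medskip
The main obstacle will be the bookkeeping in the last step: pinning down $\beta_{i,j}(R/J_G)$ for $K_{n_1,\ldots,n_t}$ cleanly. Two routes are available — a direct Hochster-formula computation on $\Delta(G)|_W$ (a disjoint union of $\ell$ simplices, hence $\widetilde H_0$ of rank $\ell-1$ and $\widetilde H_{\ge1}=0$), or an Eliahou–Kervaire / iterated mapping cone computation exploiting that $J_G$ itself can be built up by Corollary \ref{dec type}. I would use Hochster's formula since it makes the vanishing $\beta_{i,j}=0$ for $j\ne i+1$ and the explicit multipartite count transparent in one stroke; the rest of the argument is the routine collapse of a hypergeometric-style sum described above.
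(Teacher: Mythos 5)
Your proposal is correct and, for the main computation, follows the same route as the paper: identify the independent sets of $K_{n_1,\ldots,n_t}$ as the subsets of a single part $V_\ell$, observe that for nonempty $\sigma\subseteq V_\ell$ the colon ideal $J_G:\prod_{j\in\sigma}x_j^2$ is the ideal $(x_i\ ;\ i\in[n]\setminus V_\ell)$ generated by $n-n_\ell$ variables, so its only nonzero Betti numbers are $\beta_{p,p}=\binom{n-n_\ell}{p}$, and then collapse the sum in Theorem \ref{hypergraph main theorem} via $r=j-i$, $i-r=2i-j$, with the $\sigma=\emptyset$ term giving $\beta_{i,j}(R/J_G)$. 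The one point where you diverge is the closed form for $\beta_{i,j}(R/J_G)$: the paper simply cites \cite[Theorem 5.3.8]{J}, whereas you re-derive it via Hochster's formula, using that $\Delta(G)|_W$ is a disjoint union of $\ell$ simplices (so $\widetilde H_0$ has rank $\ell-1$ and higher reduced homology vanishes, which also gives the vanishing for $j\neq i+1$ without appealing to Fr\"oberg). That substitution makes the argument self-contained at the cost of a short extra computation; otherwise the two proofs coincide, including the minor bookkeeping point (which both treatments gloss over) that the $r\geq 1$ contributions occur only in bidegrees with $j>i$, so they do not interfere with the $\sigma=\emptyset$ term.
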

\begin{proof}
Assume that $G$ is $t-$partite graph with partitions $V_1,\ldots, V_t$ where $|V_\ell|=n_\ell$. Then $\sigma\subseteq [n]$ is an independent set if and only if  $\sigma\subseteq V_\ell$ for some $1\leq \ell\leq t$. So if $\sigma\neq \emptyset$ is andependent set, then  for some $1\leq \ell \leq t$, $N(\sigma)=[n]\setminus V_\ell$ and $G_\sigma$ is the empty graph on the vertex set $V_\ell\setminus \sigma$.

Now if $\sigma\subseteq V_\ell$ and $|\sigma|=r$, by Theorem \ref{hypergraph main theorem} we have
$$\beta_{i-r,j-2r}(R/J_G:\prod_{j\in \sigma}x_j^2)=\beta_{i-r,j-2r}({\bf{k}}[x_i\ ; \ i\in [n]\setminus(V_\ell\setminus \sigma)]/(x_i\ ; \ i\in [n]\setminus V_\ell)).$$ Thus
$\beta_{i-r,j-2r}(R/J_G:\prod_{j\in \sigma}x_j^2)\neq 0$ if and only if $j-2r=i-r=2i-j$ and if this is the case, then $r=j-i$ and $\beta_{i-r,j-2r}(R/J_G:\prod_{j\in \sigma}x_j^2)= {n-n_\ell\choose 2i-j}$. Now the result follows from Thorem \ref{hypergraph main theorem} and  \cite[Theorem 5.3.8]{J}.
\end{proof}

\begin{theorem}\label{characterization complete graph}
Let $G$ be a graph on the vertex set $[n]$, $I=J_G+(x_1^2,\ldots ,x_n^2)$ and $\beta_n(R/I)=t$. Then $$\sum_{j\in \mathbb{N}} \beta_{n,j}(R/I)(j-n)=n \Leftrightarrow G \ {\text {is complete}}\ t-{\text{partite graph}}.$$
\end{theorem}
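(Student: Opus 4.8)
The plan is to reduce everything to the combinatorics of the maximal independent sets of $G$, via Corollary \ref{final result}. Regarding the graph $G$ as a simple hypergraph, part (1) of that corollary reads $\beta_{n,j}(R/I)=|\{\sigma\in\max(G)\ ;\ |\sigma|=j-n\}|$ and part (2) reads $\beta_n(R/I)=|\max(G)|$; hence the hypothesis $\beta_n(R/I)=t$ says $G$ has exactly $t$ maximal independent sets, and
\[
\sum_{j\in\mathbb{N}}\beta_{n,j}(R/I)(j-n)=\sum_{\sigma\in\max(G)}|\sigma|.
\]
So I would first restate the claim as: \emph{$G$ has exactly $t$ maximal independent sets whose sizes sum to $n$ if and only if $G$ is complete $t$-partite.}

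Next I would observe that every vertex $v$ of $G$ lies in some maximal independent set (extend $\{v\}$), so $\bigcup_{\sigma\in\max(G)}\sigma=[n]$ and therefore $\sum_{\sigma\in\max(G)}|\sigma|\ge n$, with equality exactly when the members of $\max(G)$ are pairwise disjoint, i.e. when $\max(G)$ is a partition of $[n]$. Thus the equation $\sum_j\beta_{n,j}(R/I)(j-n)=n$ is equivalent to: the maximal independent sets of $G$ partition $[n]$.

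It then remains to show that $\max(G)$ partitions $[n]$ if and only if $G$ is complete multipartite with parts exactly the members of $\max(G)$ (so the number of parts equals $|\max(G)|$). For the "if" direction, in $K_{n_1,\ldots,n_t}$ every independent set lies inside a single part since all cross pairs are edges, so the maximal independent sets are precisely the $t$ parts $V_1,\ldots,V_t$, which partition $[n]$. For the "only if" direction, write $\max(G)=\{V_1,\ldots,V_t\}$ and assume these partition $[n]$; each $V_\ell$ is independent, so $G$ has no edge inside a part, and if $i\in V_k$, $j\in V_\ell$ with $k\neq\ell$ and $\{i,j\}\notin E(G)$, then $\{i,j\}$ is independent and extends to some $\sigma\in\max(G)$, i.e. $\sigma=V_m$; but $i\in V_m\cap V_k$ forces $m=k$ while $j\in V_m\cap V_\ell$ forces $m=\ell$, a contradiction. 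Hence every cross pair is an edge and $G=K_{|V_1|,\ldots,|V_t|}$, with $t=|\max(G)|$ nonempty parts.

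Combining the pieces: if $\sum_j\beta_{n,j}(R/I)(j-n)=n$, then $\max(G)$ partitions $[n]$, so $G$ is complete multipartite with $|\max(G)|=t$ parts, hence complete $t$-partite; conversely, if $G=K_{n_1,\ldots,n_t}$ then $\max(G)$ is the partition into the $t$ parts, so the sizes sum to $n$ (and the count $|\max(G)|=t$ is consistent with $\beta_n(R/I)=t$). I expect the only genuinely delicate point to be the "only if" step of the previous paragraph — the extension-and-disjointness argument showing that a vertex partition into maximal independent sets forces complete multipartiteness; everything else is bookkeeping with Corollary \ref{final result}.
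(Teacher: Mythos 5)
Your proposal is correct and follows essentially the same route as the paper: translate the Betti data via Corollary \ref{final result}, note that every vertex lies in some maximal independent set so the equality forces the maximal independent sets to partition $[n]$, and conclude that $G$ is complete $t$-partite with the parts being those sets. The only difference is that you spell out explicitly (via the extend-$\{i,j\}$ argument) why a vertex partition into maximal independent sets forces all cross pairs to be edges, a step the paper leaves implicit.
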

\begin{proof}
If $G$ is a complete $t-$partite graph with partitions $V_1,\ldots ,V_t$, then $V_1,\ldots ,V_t$ are the only maximal independent sets of $G$. So by Corollary \ref{final result}, we have $\sum_{j\in \mathbb{N}} \beta_{n,j}(R/I)(j-n)=n$.

Conversely, Let $G$ be a graph on the vertex set $[n]$ with maximal independent sets $V_1,\ldots V_t$. Assume that $\sum_{j\in \mathbb{N}} \beta_{n,j}(R/I)(j-n)=n$. Since each vertex of the graph belongs to at least one independent  set, this equality  beside Corollary \ref{final result} show that each vertex belongs to exactly one of the independent sets. So $G$ is a complete $t-$ partite graph whose partitions are $V_1,\ldots ,V_t$.
\end{proof}

\section*{\bf Acknowledgments}
 This research was
in part supported by a grant from IPM (No. 94130058). The author would like to thank Rashid Zaare-Nahandi and Somayeh Moradi for reading an earlier version of the paper and for helpful comments and remarks.


\end{document}